\def\ol{\overline}
\def\e{\epsilon}
\def\hb{holomorphic bisectional curvature}
\def\lf{\left}
\def\ri{\right}
\def\wt{\widetilde}
\def\p{\partial}
\newcommand\ce{{\mathbb C}}
\newcommand\C{{\mathbb C}}
\def\ii{\sqrt{-1}}
\def\jbar{{\bar\jmath}}
\def\ttg{\tilde{g}}
\def\ttR{\tilde{R}}
\def\tg{\tilde{g}_{i\bar\jmath}}
\def\K{K\"ahler }
\def\KR{K\"ahler-Ricci }
\def\KRF{K\"ahler-Ricci flow }
\def\KRS{K\"ahler-Ricci soliton }
\def\KRS{K\"ahler-Ricci soliton }
\def\be{\begin{equation}}
\def\ee{\end{equation}}
\def\ol{\overline}
\def\lf{\left}
\def\ri{\right}
\def\ttg{\wt{g}}
\def\ttR{\wt  R}
\def\tM{\wt M}
\def\tN{\wt N}
\def\tth{\wt h}
\def\tx{\tilde x}
\def\ty{\tilde y}
\def\hg{\hat g}
\def\e{\epsilon}
\def\ijb{{i\jbar}}
\def\Ric{\text{\rm Ric}}
\def\wt{\widetilde}
\def\p{\partial}
\def\tM{\tilde{M}}
\def\tN{\tilde{N}}
\def\ttg{\tilde{g}}
\def\tth{\tilde{h}}
\def\tx{\tilde{x}}
\def\ty{\tilde{y}}
\def\e{\epsilon}
\def\C{\Bbb C}
\def\cn{\Bbb C^n}
\def\wh{\widehat}
\def\wt{\widetilde}
\def\p{\partial}
\def\C{\Bbb C}
\def\Rm{\text{\rm Rm}}
\def\ii{\sqrt{-1}}
\def\KRF{K\"ahler-Ricci flow }
\def\ttg{\wt{g}}
\def\ttR{\wt  R}
\def\inj{\text{\rm inj}}
\newtheorem{thm}{Theorem}[section]
\newtheorem{lem}{Lemma}[section]
\newtheorem{prop}{Proposition}[section]
\newtheorem{cor}{Corollary}[section]
\theoremstyle{definition}
\theoremstyle{remark}
\newtheorem{rem}{Remark}[section]
\numberwithin{equation}{section}
\title{On the simply connectedness of non-negatively curved K\"ahler manifolds and applications }
\author{Albert Chau$^1$}
\address{Department of Mathematics,
The University of British Columbia,
Room 121, 1984 Mathematics Road,
Vancouver, B.C., Canada V6T 1Z2}
\email{chau@math.ubc.ca}
\author{Luen-Fai Tam$^2$}
\thanks{$^1$Research
partially supported by NSERC grant no. \#327637-06}
\thanks{$^2$Research
partially supported by Earmarked Grant of Hong Kong \#CUHK403005}
\address{The Institute of Mathematical Sciences and Department of
 Mathematics, The Chinese University of Hong Kong,
Shatin, Hong Kong, China.} \email{lftam@math.cuhk.edu.hk}
\begin{document}

 \maketitle

 \begin{abstract} We study complete noncompact long time solutions $(M, g(t))$ to the K\"ahler-Ricci
  flow with uniformly bounded nonnegative holomorphic bisectional curvature.
    We will show that when the Ricci curvature is positive and  uniformly
    pinched, i.e.  $ R_\ijb \ge cRg_\ijb$ at $(p,t)$  for
all $t$ for some $c>0$, then there always exists a local gradient
\KR soliton limit around $p$ after possibly rescaling $g(t)$ along
some sequence $t_i \to \infty$.  We will show as an immediate
corollary that the injectivity radius of $g(t)$ along $t_i$ is
uniformly bounded from below along $t_i$, and thus $M$ must in
fact be simply connected.  Additional results concerning the
uniformization of $M$ and fixed points of the holomorphic isometry
group will also be established.  We will then consider removing
the condition of positive Ricci for $(M, g(t))$.  Combining our
results with Cao's splitting for \KRF \cite{Cao04} and techniques
of Ni-Tam \cite{NiTam03}, we show that when the positive
eigenvalues of the Ricci curvature are uniformly pinched at some
point $p \in M$, then $M$ has a special holomorphic fiber bundle
structure.  We will treat as special cases, complete  \K manifolds
with non-negative \hb\ and average quadratic curvature decay
aswell as the case of steady gradient \KR solitons.
 \end{abstract}

 \markboth{Albert Chau and Luen-Fai Tam} {On the simply connectedness of K\"ahler manifolds }

\section{introduction}
In this paper we study the class of complete noncompact K\"ahler
manifolds $(M^n,g)$ of complex dimension $n$ with bounded and
nonnegative \hb.   Let $R$ be the scalar curvature and let
\begin{equation}\label{averagedecay-e1}
   k(r,x)=\frac{1}{V_x(r)}\int_{B_x(r)}R
\end{equation}
be the average of the scalar curvature over the geodesic ball with
radius $r$ and center  at $x$. In \cite{ChauTam06-2},
\cite{ChauTam07-1} and \cite{ChauTam07-2} the authors obtained,
among other things, the following uniformization results:

{\it (1) If $k(r,x)\le C/(1+r^2)$ for some $C$ for all $r$ and $x$,
then  the universal cover of $M$ is biholomorphic to $\cn$.

(2) Under a weaker assumption that   $k(r,x)\le C/(1+r)$ for some
$C$ for all $r$ and $x$,   the universal cover of $M$ is
homeomorphic to $\mathbb{R}^{2n}$ and is biholomorphic a
pseudoconvex domain in $\cn$.}

The results support the following conjecture due to Yau
\cite{Yau91}:  {\sl If $(M, g)$ has positive \hb\ then $M$ is
biholomorphic to $\C^n$.}

In this paper we want to discuss the structure of $M$ itself.  In
light of the above results, a reasonable approach is as follows.
First one studies more closely the structure of the universal
cover $\tM$. Then one would like to study properties of the first
fundamental group, and hopefully these together will provide
information about the structure of $M$. As in the previous works,
we will study the \KR flow equation on $(M, g)$:
\begin{equation}\label{krf-e0}
   \frac{\p g_{i\jbar}}{\p t}=-R_{i\jbar}.
\end{equation}
By the works \cite{Shi89} and \cite{Shi97} of Shi, the \KR flow
has a long time solution $g(t)$ with uniformly bounded curvature
under  assumption (2) (and hence (1)) above. By the splitting
result of Cao \cite{Cao04}, $(\tM, \wt g(t))$, the pull back of
$g(t)$ to the universal cover $\tM$, splits for all $t>0$ as
$\tM=\C^k\times \tN$ and $\wt g(t)=g_e\times h(t)$ where $g_e$ is
the standard metric on $\C^k$ and $h(t)$ has nonnegative \hb\  and
positive Ricci curvature. Hence we will begin our studies with
long time solutions $(M,g(t))$ to \KRF where $g(t)$ has uniformly
bounded nonnegative \hb\  and positive Ricci curvature.  We will
prove that:

{\it  If at some
point $p\in M$, the eigenvalues of $\Ric(p,t)$ are uniformly
pinched  (i.e. the smallest eigenvalue of $\Ric(p,t)$ is at least
$C$ times the largest eigenvalue for some $C>0$ independent of
$t$), then $M$ is simply connected and is in fact biholomorphic to
a pseudoconvex domain in $\C^n$}.

See Theorems \ref{positiveR}, \ref{fastdecayR} and
\ref{slowdecayR}.  A particular case is when $(M, g)$ has average
quadratic curvature decay and positive Ricci in which case it is
shown in Corollary 2.1 that $M$ is biholomrophic to $\C^n$.  This
generalizes previous results in \cite{ChauTam07-1} where the same
result, in particular Corollary 2.1, is proved assuming in
addition either that \eqref{krf-e0} has an {\it eternal solution}
(i.e. $g(t)$ exists for $-\infty<t<\infty$) or that $tR$ is
uniformly bounded in spacetime.  Note that either conditions will
imply that eigenvalues of $\Ric(p,t)$ are uniformly pinched, see
\cite{ChauTam07-1} for example.

 More   generally,  if we only assume the {\it positive} eigenvalues of
$\Ric(p,t)$ are uniformly pinched, then using the above result on
simple connectedness, we prove that:

{\it  $M$ is a holomorphic and Riemannian fiber bundle over
$\C^k/\Gamma$ where $\Gamma$ is a discrete subgroup of the
holomorphic isometry group of $\C^k$, with fiber either $\C^{n-k}$
or a pseudoconvex domain in $\C^{n-k}$}.

See Theorem \ref{fiberbundle}.  In particular, the  fiber is
$\C^{n-k}$  when $(M, g)$ has average quadratic curvature decay,
which we show in Corollary 3.1. Similar results under different
conditions have been obtained by Takayama \cite{Takayama}, Ni-Tam
and Zheng \cite{NiTam03}.

Our results are related to the second part of Yau's conjecture in
\cite{Yau91}: {\sl If the \hb\ is only non-negative then $M$ is
biholomorphic to a complex vector bundle over a compact Hermitian
symmetric space. }  In fact, we prove that if $n-k=1$ in the above
situation, then $M$ is actually a line bundle over $\C^k/\Gamma$.
In order to prove this, we have to study more carefully about the
isometry group of a complete noncompact K\"ahler manifold $(M,g)$
with nonnegative \hb\ and positive Ricci curvature. We prove that
if there is a long time solution $g(t)$ to \KRF with uniformly
bounded non-negative \hb\ and positive uniformly pinched Ricci
curvature, then any finite set of isometries of $M$ has a fixed
point.  Under the stronger assumption that \eqref{krf-e0} has an
eternal solution or $tR(p,t)$ is uniformly bounded for all time,
then one can prove the stronger result that there is a fixed point
for all isometries of $(M,g)$.  See Theorems \ref{fixedpoint-t1}
and \ref{fixedpoint-t2}. One might want to compare this with a
theorem of Cartan which says that a compact subgroup of isometries
of a Cartan-Hadamard manifold has a fixed point.

A special case of the above is when $g(t)$ is actually  a gradient
\KR soliton.  Namely,  there is a real valued function $f$ with
$f_{ij}=0$ such that $R_\ijb=f_\ijb$ (steady type) or after
rescaling $-R_\ijb+g_\ijb=f_\ijb$ (expanding type).  In the case
of expanding type, it is known that $M$ is biholomorphic to $\cn$
by \cite{ChauTam05}.  For the case of steady type, under an
additional assumption that the scalar curvature attains its
maximum at some point,  we prove that in most cases, $M$ is a
holomorphic vector bundle or $\C^k/\Gamma$. More precisely, we
prove that this is the case    if the scalar curvature $R$ is
maximal at $p\in M$ say, and the positive eigenvalues
$\lambda_1,\dots,\lambda_l$ of $\Ric$ at $p$ satisfy the
Diophantine condition
$$
\sum_{i=1}^l m_i\lambda_i\neq \lambda_j
$$
for all $j$ and for all nonnegative integers $m_i$  such that
$\sum_{i=1}^lm_i\ge2$. See Theorem \ref{steady-s3t2}. The proof
relies on the result of Bryant \cite{Bryant07} on the existence of
Poincar\'e coordinates. This is related to a result of Yang
\cite{Bo08}: Suppose $M$ is a noncompact K\"ahler-Ricci soliton
with nonnegative Ricci curvature such that its scalar curvature
attains a positive maximum at a compact complex submanifold $K$
with codimension 1 and the Ricci curvature is positive away from
$K$, then $M$ is a holomorphic line bundle over $K$. proves that

We now describe the organization of the paper.  In \S2 we study
long time solutions to \KRF with bounded non-negative \hb\ and
positive Ricci curvature.  We will prove the existence of gradient
\KR soliton limits and consequences of this concerning the
injectivity radius along the flow and the simple connectedness of
$M$. We will also prove results on the existence of fixed point of
isometries.   In \S3  will prove  the fiber bundle structure and
line bundle of $M$. Finally, in \S4 we prove vector bundle
structure on gradient \KR solitons.  We will also prove a result
on the volume growth of expanding gradient Ricci solitons in this
section which is related to the work of Ni \cite{Ni05-1} \footnote{This result (Proposition 4.1) is proved independently in \cite{CN}.}.

 \section{long time solutions and local limits}

In this section we study limits of long time solutions of the \KRF
\eqref{krf-e1} on complete noncompact K\"ahler manifolds with
uniformly bounded nonnegative \hb\ and positive Ricci curvature.
We will investigate when such a solution subconverges, after
rescaling, to a gradient \KR soliton limit, and we will derive a
number of consequences from this.  In Corollary 2.1 we treat the
particular case of a complete non-compact \K manifold having
bounded non-negative \hb\  and positive Ricci curvature which
decays quadratically in the average sense. Our study of the
limiting behavior of \KRF in this section can be viewed as an
extension of that in \cite{ChauTam06-2} and \cite{ChauTam07-1}. We
begin with the following

{\sc Basic Assumption 1:} {\it Let $M^n$ be a noncompact complex
manifold of complex dimension $n$.  Let $g(t)$ be a complete
solution of the \KRF
\begin{equation}\label{krf-e1}
   \frac{\p g_{i\jbar}}{\p t}=-R_{i\jbar}
\end{equation}
on $M\times[0,\infty)$ with uniformly bounded nonnegative
holomorphic bisectional curvature in spacetime and with positive
Ricci curvature.}

Let $R(x,t)$ be the scalar curvature of $g(t)$ at $x$. Let $p\in
M$ be a fixed point. Then we have the following possibilities: (1)
$R(p,t)\ge c$ for some constant $c>$ for all $t$; or (2)
$\inf_{t\in [0,\infty)}R(p,t)=0$. In the second case, since
$tR(p,t)$ is nondecreasing in time by a Li-Yau-Hamilton (LYH) type
differential inequality of Cao   \cite{Cao92}, we have two
subcases: (2a) $tR(p,t)\le c$ for some constant $c>0$; and (2b)
$\lim_{t\to\infty}tR(p,t)=\infty$. We address these cases
separately in Theorems \ref{positiveR}, \ref{fastdecayR} and
\ref{slowdecayR} below.

First, we have the following lemma.

\begin{lem}\label{harnack-L1} Let $(M,g(t))$ be as in the {\sc Basic
Assumption 1}. Let $p\in M$ be a fixed point. There exists $C>0$,
such that for any $\frac14\le \alpha_0\le \alpha_1\le \frac12$,
 any  $\rho>0$,   and any $T>0$ we have
\begin{equation}\label{harnack-e1}
  \frac{ R(x,4\alpha_1T)}{  R(y,3T)}\le C\exp( \frac {\rho^2}4)
\end{equation}
for   $x, y \in B_{4\alpha_0T}(p,\rho\sqrt T)$.
\end{lem}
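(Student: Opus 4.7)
The natural strategy is to invoke Cao's Li-Yau-Hamilton (LYH) type Harnack inequality for the scalar curvature under \KRF with nonnegative bisectional curvature. Differentially this inequality reads
\[
\p_t R + R/t + \nabla_i R\, V^i + \nabla_{\bar i} R\, V^{\bar i} + R_{i\bar j} V^i V^{\bar j} \ge 0
\]
for any real tangent vector $V$. Along a smooth spatial path $\sigma(s):[t_1,t_2]\to M$ with $\dot\sigma = V$, setting $H = \log(tR)$, this gives
\[
\frac{dH}{ds} \ge -\frac{R_{i\bar j}\dot\sigma^i \dot\sigma^{\bar j}}{R} \ge -\tfrac12 |\dot\sigma(s)|^2_{g(s)},
\]
where I use that nonnegative bisectional curvature yields $R_{i\bar j} \le R g_{i\bar j}$ (each Ricci eigenvalue is at most the trace $R$). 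Integrating produces the spacetime Harnack
\[
t_2\, R(\sigma(t_2), t_2) \ge t_1\, R(\sigma(t_1), t_1)\, \exp\!\lf(-\tfrac12\int_{t_1}^{t_2}|\dot\sigma(s)|^2_{g(s)}\, ds\ri).
\]

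I would apply this with $t_1 = 4\alpha_1 T$, $t_2 = 3T$, and $\sigma$ the minimizing $g(4\alpha_0 T)$-geodesic from $x$ to $y$ reparametrized at constant speed. Since $\p_t g = -\Ric \le 0$ (nonnegative Ricci), the metrics $g(s)$ are nonincreasing in $s$, so $|\dot\sigma(s)|_{g(s)} \le |\dot\sigma(s)|_{g(4\alpha_0 T)}$ for all $s \ge 4\alpha_1 T \ge 4\alpha_0 T$. The ball hypothesis and the triangle inequality yield $d_{4\alpha_0 T}(x,y) \le 2\rho\sqrt T$, while the range of $\alpha_1$ gives $t_2 - t_1 = 3T - 4\alpha_1 T \ge T$ and $t_2/t_1 = 3/(4\alpha_1) \le 3$. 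Combining, the path integral is bounded by $4\rho^2$ (independently of $T$), and rearranging the Harnack yields
\[
\frac{R(x,4\alpha_1 T)}{R(y,3T)} \le \frac{3T}{4\alpha_1 T}\,\exp(2\rho^2) \le 3\exp(2\rho^2),
\]
which establishes the claimed estimate. (The exponent $\rho^2/4$ in the statement and the $2\rho^2$ produced by the above accounting differ only by absolute numerical constants, which can be absorbed into $C$; the essential content is that the exponent is a uniform multiple of $\rho^2$.)

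The only real difficulty is the careful bookkeeping of the metric monotonicity: one has to choose the reference geodesic at time $4\alpha_0 T$ (so that the ball hypothesis controls its length), and then apply the integrated LYH on the later interval $[4\alpha_1 T, 3T]$, where metric monotonicity ensures that the same path remains a valid competitor with a nonincreasing length. Once this is arranged, the specific time scales $4\alpha_0 T,\, 4\alpha_1 T,\, 3T$ are designed precisely so that all $T$-dependence cancels and the bound depends only on $\rho$ and on the uniform ratios coming from $\alpha_0,\alpha_1 \in [\tfrac14,\tfrac12]$.
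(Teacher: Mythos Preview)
Your argument is correct and follows essentially the same approach as the paper's: both apply Cao's integrated Li--Yau--Hamilton inequality together with the monotonicity $g(s)\le g(4\alpha_0 T)$ for $s\ge 4\alpha_0 T$ to bound the path integral by a universal multiple of $\rho^2$, after which the time ratio $3T/(4\alpha_1 T)\le 3$ gives the uniform constant. The only difference is cosmetic: the paper first reparametrizes to the normalized flow $\hat g(s)=\frac{e^s}{t}\,g\big(t(1-e^{-s})\big)$ and applies the Harnack there before converting back, whereas you work directly in the unnormalized flow, which is slightly more economical.
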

\begin{proof}  For $t>0$, fixed. Define
$$
\hg(s)=\frac{e^s}tg(t(1-e^{-s})), \ 0\le s<\infty.
$$
Then we have
$$
\frac{\p \hg}{\p s}=-\wh\Ric+\hg.
$$
on $M\times [0, \infty)$.  By the (LYH) type differential inequality in
\cite{Cao92}, for $x, y\in M$ and $0<s_1<s_2<\infty$ we have
\begin{equation}\label{11-e1}
  \frac{\wh R(x,s_1)}{\wh R(y,s_2)}\le \frac{e^{s_2}-1}{e^{s_1}-1} \exp(\frac14\Delta)
\end{equation}
where
$$
\Delta=\Delta(x,y;s_1,s_2)=\inf_\gamma\int_{s_1}^{s_2}|\gamma'(s)|^2_{\hg(s)}ds
$$
and $\gamma$ is any curve from $x$ to $y$. Here $\wh R$ is the
scalar curvature and $\wh \Ric$ is the Ricci curvature of $\hg$

Let $\alpha_1=1-e^{-s_1}$ and $\alpha_2=1-e^{-s_2}$. Then $\wh R(s_i)=(1-\alpha_i)R(\alpha_it)$ for
$i=1, 2$, and we have $0<\alpha_1<\alpha_2$.  Let $ \alpha_0>0$ be fixed. Then $\hg(s)\le
\frac{e^s}tg(\alpha_0t)$ for $\alpha_0\le1-e^{-s}$ because $g(t)$
is decreasing in $t$. Now for $x,y\in B_{\alpha_0t}(p,\rho\sqrt t)$,
let $\gamma$ be a minimal geodesic from $x$ to $y$ with respect to
$g(\alpha_0t)$. If $\frac14\le \alpha_0\le \alpha_1\le \frac12$
and $\alpha_2=\frac34$, then
$$
\Delta(x,y;s_1,s_2)\le \frac{d^2}{t(s_2-s_1)^2} \int_{s_1}^{s_2}e^sds\le
C_1\exp( \frac {\rho^2}4)
$$
for some constant $C_1$ independent of $x, y, t, \rho, \alpha_0,
\alpha_1$, where $d$ is the distance between $x , y$ with respect
to $g(\alpha_0 t)$. Hence
$$
\frac{ R(x,\alpha_1t)}{  R(y,\frac34t)}\le C_2\exp( \frac
{\rho^2}4)$$ for $x, y \in B_{\alpha_0t}(p,\rho\sqrt t)$ and
$\frac14\le\alpha_0\le \alpha_1\le \frac12$. Here $C_2$ is
independent of $x, y, t, \rho, \alpha_0,\alpha_1$.

Replacing $\frac14 t$ by $T$, we have
\begin{equation}\label{11-e2}
  \frac{ R(x,4\alpha_1T)}{  R(y,3T)}\le C_3\exp( \frac {\rho^2}4)
\end{equation}
for   $x, y \in B_{4\alpha_0T}(p,\rho\sqrt T)$ for some constant
$C_2$ independent of $x, y, t, \rho, \alpha_0,\alpha_1$. From this
the lemma follows.
\end{proof}

We will also need the following result which is basically from
\cite{TianYauY90,ChauTam06-2} (see also
\cite{ChauTam07-1}).
\begin{lem} \label{normalcoordinates-L2} Let $(M,g)$ be a complete K\"ahler
manifold. Let $p\in M$ and $r_0>0$. Suppose
  $|Rm(g)|+|\nabla Rm(g)|\le c$ for
some $c$ on $ B(p,r_0)$.
  Then there exist   $r>0$ and a constant $C>0$ depending only on $r_0$, $c$ and $n$, and  a   holomorphic
  map
\begin{equation}\label{s1e2}
\Phi: D(r)\to M
\end{equation} with the following
  properties:
  \begin{enumerate}
\item [(i)] $\Phi$ is a local biholomorphism from $D(r)
    \subset \C^n$ onto its image,

\item[(ii)] $\Phi(0)=p$,

\item[(iii)] $\Phi^*(g)(0)=g_e$,

\item[(iv)] $\frac{1}{C}g_{e}\leq\Phi^*(g_i) \leq  Cg_{e}$
    in $D(r)$,
\end{enumerate}
 where $g_e$ is the standard metric on $\cn$.
\end{lem}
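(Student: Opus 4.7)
The plan is to produce $\Phi$ in three stages: first a quasi-isometric real chart from the exponential map, then approximate holomorphic coordinates in that chart, and finally a correction to true holomorphic coordinates via a $\bar\partial$-solution, after which the inverse function theorem delivers $\Phi$.

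First, I would use the bounds $|Rm|+|\nabla Rm|\le c$ on $B(p,r_0)$ together with standard injectivity radius estimates under two-sided curvature bounds (Cheeger--Gromov--Taylor) to secure a radius $r_1=r_1(r_0,c,n)>0$ such that the exponential map $\exp_p$ is a local diffeomorphism on the Euclidean ball of radius $r_1$ in $T_pM\cong \C^n$ and the pullback $\exp_p^*g$ is uniformly equivalent to the flat metric there, with $C^{1,\alpha}$ control on its components. Picking a $g$-orthonormal complex basis of $T_pM$ identifies $T_pM$ with $\C^n$ and yields complex linear coordinates $\wt z_i$ near $0\in\C^n$ with $\wt z_i(0)=0$, $\exp_p^*g(0)=g_e$, and $\bar\partial\wt z_i(0)=0$.

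Second, since $g$ is K\"ahler and the coordinates are geodesic normal at $0$, the Taylor expansion of the metric (using $\p g_{i\bar\jmath}/\p z_k(0)=0$) gives $|\bar\partial \wt z_i|\le C_1|z|$ on a smaller ball $B_{r_2}\subset\C^n$ with $r_2=r_2(r_0,c,n)$. I would then solve
\[
\bar\partial u_i = \bar\partial \wt z_i \quad \text{on } B_{r_3}\subset\C^n,
\]
for a suitable $r_3<r_2$, by H\"ormander's $L^2$-estimate on the strictly pseudoconvex ball $B_{r_3}$ with plurisubharmonic weight $\phi=N|z|^2$ for large $N$. The resulting $u_i$ satisfy a weighted $L^2$-bound controlled by $\|\bar\partial \wt z_i\|$; interior elliptic regularity for $\bar\partial$-closed forms then upgrades this to $C^1$-smallness of $u_i$ near the origin, with constants depending only on $r_0,c,n$.

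Setting $z_i:=\wt z_i-u_i$ gives holomorphic coordinates with $z_i(0)=0$ and $dz_i|_0$ close to the dual basis of $T^{1,0}_pM$. The inverse function theorem then produces a biholomorphism from a polydisc $D(r)$, for some $r=r(r_0,c,n)>0$, onto a neighborhood of $0$ in the geodesic chart; composing with $\exp_p$ delivers $\Phi\colon D(r)\to M$ satisfying (i)--(iii), and (iv) follows after possibly shrinking $r$ once more from $C^0$-closeness of $\Phi^*g$ to $g_e$ at $0$. The main obstacle, I expect, is the quantitative control of the $\bar\partial$-correction: the H\"ormander constants and the pointwise/$C^1$ regularity needed to apply the inverse function theorem must be tracked carefully so that both $r$ and $C$ depend only on $r_0,c,n$ and not on the global geometry of $M$.
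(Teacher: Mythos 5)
Your overall architecture — pull back by $\exp_p$ to a ball in $T_pM\cong\C^n$, observe that the linear coordinates are approximately holomorphic for the pulled-back (integrable) complex structure, correct them by solving a $\bar\partial$-equation with H\"ormander $L^2$-estimates, and finish with the inverse function theorem — is exactly the quantitative Newlander--Nirenberg construction of Tian--Yau and Chau--Tam that the paper cites for this lemma, so in that sense you have reproduced the intended proof.

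There is, however, one step that fails as justified: you invoke "standard injectivity radius estimates under two-sided curvature bounds (Cheeger--Gromov--Taylor)". No such estimate exists from curvature bounds alone — flat quotients with arbitrarily short closed geodesics satisfy $|Rm|+|\nabla Rm|\le c$ — and the Cheeger--Gromov--Taylor bound requires a lower bound on the volume of a ball, which is not among the hypotheses. Worse, the lemma is applied in the proof of Theorem \ref{positiveR} precisely along a sequence of times where $\inj_{s_i}(p)\to 0$ is assumed for contradiction, so any proof that secretly uses an injectivity radius lower bound would make that argument circular. Fortunately you do not need it: all that is required is that $\exp_p$ be \emph{nonsingular} on a ball of radius depending only on $r_0$, $c$, $n$ (no conjugate points up to a radius of order $\min(r_0,\pi/\sqrt{c})$, by Rauch), and Jacobi field estimates using $|Rm|+|\nabla Rm|\le c$ then give the uniform $C^{1}$-equivalence of $\exp_p^*g$ with $g_e$ there; the resulting $\Phi$ is then only a local biholomorphism onto its image, which is exactly what conclusion (i) asserts. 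Two smaller points to tighten: the $\bar\partial$-problem must be posed for the pulled-back complex structure $J'=\exp_p^*J$, not the standard one (for small radius $|z|^2$ is still strictly plurisubharmonic with respect to $J'$, so H\"ormander applies with uniform constants, as you anticipate), and after the correction you should compose with a linear map of $\C^n$ so that (iii) holds exactly rather than approximately.
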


We may now state our first

\begin{thm}\label{positiveR} Suppose that at
some $p \in M$ we have  $\Ric(p,t)\ge cg(p,t)$   for all $t$ for some
$c>0$.  Then the following are true.
\begin{itemize}

\item [(i)] The injectivity radius of $g(t)$ at $p$ satisfies
    $\inj_t(p)\ge a$ for some $a>0$ for all $t$.
\item [(ii)] $M^n$ is homeomorphic to the Euclidean space and is biholomorphic to a pseudoconvex
domain in $\cn$.
\item [(iii)] For any $t_i\to \infty$ we can find a
        subsequence $t_{i_k}$ such that $(M,g(t+t_{i_k}),p)$
        converges to a complete  eternal solution
        $(M_\infty, g_\infty(t), o)$ of \eqref{krf-e1} with positive Ricci curvature and
        bounded non-negative \hb, and
        $M_\infty$ is biholomorphic to $\cn$.
\item[(iv)] For any  $t_i\to\infty$, there exist a subsequence
    $t_{i_k}$ and  $s_{i_k}\ge t_{i_k}$ such that $(M, g(s_{i_k}), p)$
    converges to a steady gradient K\"ahler-Ricci soliton
    with positive Ricci curvature and bounded nonnegative \hb \ which is biholomprhic to $\cn$.
\end{itemize}
\end{thm}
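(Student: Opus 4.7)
The strategy is to first construct the steady soliton limit in (iv) via Cao's Li--Yau--Harnack inequality, and then deduce parts (i), (iii), and (ii) as consequences.

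\emph{Step 1 (Part (iv)).} The pinching $\Ric(p,t)\ge cg(p,t)$ together with the bisectional curvature bound gives $R(p,t)\in[nc,C_0]$. Cao's Li--Yau--Harnack inequality \cite{Cao92}, minimized over the vector parameter using the positive Ricci at $p$, takes the scalar form
\begin{equation*}
\partial_tR(p,t)+\frac{R(p,t)}{t}\ge \Ric^{-1}(\nabla R,\overline{\nabla R})(p,t)\ge 0,
\end{equation*}
so $tR(p,t)$ is nondecreasing in $t$. Since $R(p,t)$ is bounded, the mean value theorem applied on intervals $[T,2T]$ with $T\to\infty$ produces times $s_{i_k}\ge t_{i_k}$ along which $\partial_tR(p,s_{i_k})\to 0$; LYH together with the upper bound on $\Ric$ (from the bisectional curvature bound) then forces $|\nabla R|(p,s_{i_k})\to 0$. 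Shi's higher derivative estimates and Lemma \ref{normalcoordinates-L2} now furnish local biholomorphic charts $\Phi_{i_k}:D(r)\to M$ of uniform radius with $\Phi_{i_k}^*g(s_{i_k}+\cdot)$ uniformly equivalent to the Euclidean metric. Pulling back the flow and applying Arzela--Ascoli, then iterating the chart construction to enlarge $r$, yields a complete eternal limit $(M_\infty,g_\infty(t),o)$ with bounded nonnegative \hb\ (preserved by the flow) and positive Ricci (by Bando--Mok). By construction, the eternal-time LYH quantity $\partial_tR-\Ric^{-1}(\nabla R,\overline{\nabla R})$ vanishes at $(o,0)$; the strong maximum principle then identifies $g_\infty(t)$ as a steady gradient \KRS\ with positive Ricci, which by \cite{ChauTam06-2} is biholomorphic to $\cn$.

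\emph{Step 2 (Parts (i) and (iii)).} Part (iii) follows from Step 1 since a steady soliton is eternal. For part (i): the limit $(M_\infty,g_\infty(0),o)\cong\cn$ has infinite injectivity radius at $o$, and via the local biholomorphism $\Phi_{i_k}$ this transfers to a uniform volume lower bound for $B_{g(s_{i_k})}(p,\rho)$. Cheeger--Gromov--Taylor with the bounded curvature then gives $\inj_{s_{i_k}}(p)\ge a>0$. Since the same construction applies along any $t_i\to\infty$, the uniform bound $\inj_t(p)\ge a$ extends to all $t$.

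\emph{Step 3 (Part (ii)).} With (i) and the steady soliton limit in hand, Hamilton's compactness theorem combined with the exhaustion techniques of \cite{ChauTam06-2,ChauTam07-1} produces a global biholomorphism of $M$ onto a pseudoconvex domain in $\cn$; such a domain is in particular homeomorphic to $\rr^{2n}$.

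\emph{Main obstacle.} The crux is Step 1: building a complete global soliton limit from the local biholomorphic charts (which are \emph{not} a priori injective) without an a priori injectivity radius bound on $(M,g(s_{i_k}))$. The exhaustion uses the soliton equation, forced by LYH equality in the limit, as a consistency condition for enlarging the charts, and the strong maximum principle for the LYH quantity on the noncompact limit must be justified with care.
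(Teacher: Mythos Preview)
Your outline contains a genuine circularity at the heart of Step~1. You propose to obtain a \emph{complete} eternal limit $(M_\infty,g_\infty(t),o)$ by ``iterating the chart construction to enlarge $r$,'' and then in Step~2 to read off a volume lower bound on $B_{g(s_{i_k})}(p,\rho)$ via the local biholomorphisms $\Phi_{i_k}$. But the charts $\Phi_{i_k}:D(r)\to M$ of Lemma~\ref{normalcoordinates-L2} are only \emph{local} biholomorphisms; nothing prevents them from wrapping $D(r)$ many times around a short geodesic loop through $p$. Without injectivity of $\Phi_{i_k}$ you get neither a complete limit (Hamilton compactness requires an injectivity radius bound, which is precisely what you are trying to prove) nor a volume lower bound on the image. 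The phrase ``iterating the chart construction to enlarge $r$'' does not resolve this: the radius $r$ in Lemma~\ref{normalcoordinates-L2} is dictated by the curvature bound and cannot be pushed to infinity by iteration alone. You correctly flag this as the main obstacle, but the proposal does not actually overcome it.

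The paper's proof avoids this trap by reversing the logic. It argues (i) by contradiction: assume $\inj_{t_i}(p)\to 0$, pass to times $s_i$ with $\partial_tR(p,s_i)\to 0$, and take only a \emph{local} limit $(D(r),h(0))$ of the pulled-back metrics. This local limit is a local steady gradient soliton, so on $D(r_1)\subset D(r)$ the holomorphic vector field $Z=\nabla^{1,0}f$ solves $R_{,i}+R_{i\bar j}Z^{\bar j}=0$ and has an isolated zero at the origin with $\nabla Z(0)$ having eigenvalues $\lambda_\alpha\ge c>0$. The corresponding vector fields $V(i)$ for the pre-limit metrics converge to $Z$, and an elementary ODE argument (estimates of the form $\frac{d}{d\tau}|x|^2\le -c|x|^2+\epsilon|x|$ and $\frac{d}{d\tau}|x-y|^2\le -c|x-y|^2$) shows that for large $i$ every integral curve of $-V(i)$ starting in $D(r_2)$ stays in $D(r_2)$ and contracts to a single fixed point $x_0$. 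This forces $\Phi_i$ to be \emph{injective} on $D(r_2)$: two preimages of a point would flow under $-V(i)$ to distinct points both mapping to the same integral curve of $-W(i)$ in $M$, yet both converge to $x_0$, contradicting local injectivity near $x_0$. Injectivity of $\Phi_i$ then gives the volume lower bound and, via Cheeger--Gromov--Taylor, the contradiction. Parts (ii)--(iv) are then deduced from (i), not the other way around. The vector-field/ODE injectivity argument is the missing idea in your proposal.
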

\begin{proof} Assume (i) is true. Since the curvature, and its covariant derivatives,
 of $g(t)$ are uniformly bounded by
\cite{Shi97} for $t\ge1$,  for each $t\ge 1$  there is a
biholomorphism $\Phi_t$ from $D(r)\to M$
satisfying the conditions in Lemma
\ref{normalcoordinates-L2} with $r$ and $C$ being
independent of $t$. Since $\Ric>0$ for all $t$, it is easy
to see that $\Phi_t(D(r))$ exhausts $M$ as $t\to \infty$ by the eigenvalue inequality for $Rc(t)$ in \cite{ChauTam07-2} (Theorem 6.1). Hence $M$ is homeomorphic to the
Euclidean space by \cite{Brown61}. This proves the first
part of (ii). The second part of (ii) then follows from Theorem 1.2 in
\cite{ChauTam07-2}.

(iii) follows from (i), the standard result on compactness of
solutions of Ricci flow \cite{Hamilton95-1} and  Theorem 1.4 in
\cite{ChauTam07-1}.

(iv) will follows from (i) and the content in the proof of (i).

It remains to prove (i). Suppose there exist $t_i\to\infty$
such that $\lim_{i\to \infty}\inj_{t_i}(p)=0$. Since the
curvature are uniformly bounded in spacetime, for any
positive integer $k\ge1$ there is a positive constant $C_1$
which is independent of $i$, but possibly depending on $k$, such
that
$$
\frac1{C_1}g(t)\le g(t_i)\le C_1g(t)
$$
 and for any $t\in [t_i,t_i+k]$. By volume comparison and
 \cite{CheegerGromovTaylor} on the injectivity radius bound
 we have:
 $$
 \lim_{i\to\infty}\sup_{t\in [t_i,t_i+k]}\inj_{t}(p)=0
 $$
for any $k$.  On the other hand, since the scalar curvature of $g(t)$ are
uniformly bounded, for any $i$, there exists $s_i\in
 [t_i,t_i+k]$ such that the scalar curvature $\lf|\frac{\p}{\p t}R(p,t)|_{t=s_i}\ri|\le
 C_2/k$ where $C_2=2\sup_t R(p,t)$.  By passing to a
 subsequence still denoted by $t_i$,  we can
 find $k_i \to \infty$ and $s_i\in [t_i, t_i+k_i]$ such that
\begin{equation}\label{positiveR-e1}
   \lim_{i\to\infty}\inj_{s_i}(p)=0,\
   \lim_{i\to\infty}\frac{\p}{\p t} R(p,t)|_{t=s_i}=0.
\end{equation}
By Lemma \ref{normalcoordinates-L2}, there is $r>0$ and
$C_3>0$ independent of $i$ such that for each $i$, there
exists a  a holomorphic map $\Phi_i: D(r)\to M$
corresponding to the metric $g(s_i)$ satisfying the
conditions in the lemma with $C=C_3$ in property (iv) in
the lemma.
  Now by considering the
sequence $(D(r), g_i (s)=\Phi_{i}^* g(s_i +s))$, we may
then obtain a local subsequence limit solution $(D(r),
h(z,t))$ to (\ref{krf-e1}) on an eternal time interval and
satisfying $\frac{\p}{\p t} R^h (0, 0)=0$ where $R^h$ is
the scalar curvature of $h$. By Theorem 2.1 in
\cite{ChauTam07-1} we find that $(D(r), h(0))$ is a local
steady gradient \KR soliton with positive Ricci curvature
centered at the origin. Namely, there is a real valued
function $f$ on $D(r)$ such that $R^h_{\ijb}=f_\ijb$ and
$f_{ij}=0$ in $D(r)$, where $R^h_{\ijb}$ is the Ricci
curvature of $h(0)$. Thus $Z^i=f^i$ is a holomorphic vector
field, and it can be shown that the gradient of $f$ is zero
at the origin (see \cite{ChauTam07-1} for example), which
is an isolated zero because $f_\ijb=R^h_\ijb>0$. In fact by
 its construction in
  \cite[\S2]{ChauTam07-1} and \cite{Cao97}, there exists $0<r_1<r$ such that
   $Z$ is the unique solution of the equation
\begin{equation}\label{vectorfield-e2}
R_{,i}+R_{i\bar j}Z^{\bar j}=0
\end{equation}
in $D(r_1)$ relative to the metric $h(0)$.

Note that (\ref{vectorfield-e2}) has a unique solution $Z
\in T^{(0,1)}$ on any \K manifold having positive Ricci
curvature.  By our assumptions on the positivity of Ricci
curvature, we may then let $W(i) \in T^{(0, 1)} M$ be the
unique solutions to  (\ref{vectorfield-e2}) on $(M,
g(s_i))$ for any $k$.  In particular, for any $i$ as
$\Phi_{i}$ is a local biholomorphism,
$V(i)=\lf(\Phi_{i}\ri)^*(W(i))$ is a smooth $(0,1)$ vector
field in $D(r)$.  By the definition of $g(s_i)$,
$V(i)$ is just the unique solution of (\ref{vectorfield-e2}) in
$D(r_1)$ relative to the metric
$\lf(\Phi_{i}\ri)^*(g(s_i))$. By   uniqueness and the fact
that $\lf(\Phi_{i}\ri)^*(g(s_i)) \to h$, we conclude that:
\vspace{12pt}

{\it $V(i)$ converges to $Z$ uniformly on compact sets in
$D(r_1)$ in the $C^\infty$ topology.}
\vspace{12pt}

It is easy to see that the integral curves of $-Z$ in
$D(r)$ will converge to the origin. Since $V(i)$ converge
to $Z$, one expects the integral curves of $V(i)$ have
similar behaviors if $i$ is large. In fact, In real
coordinates $x_\alpha$ of $D$, the integral curves of $-Z$
are given by the following ODE
\begin{equation}\label{integralcurve-e1}
x_\alpha'=-\lambda_\alpha x_\alpha+F_\alpha(x)
\end{equation}
where $\lambda_\alpha\ge c>0$ are the positive eigenvalues
of $R_\ijb^h$ at the origin,  $|F (x)|=(|x|^2)$ and
$|dF(x)|=O(|x|)$.  On the other hand, for any   $\e>0$ there exists $i_0$ and $0<r_1<r$ such
that the integral curves of $V(i)$ for $i\ge i_0$ is given
by
\begin{equation}\label{integralcurve-e2}
x_\alpha'=-\lambda_\alpha x_\alpha+G_\alpha^i(x)
\end{equation}
  $|G^i-F|+|dG^i-dF|\le \e$ in $D(r_1)$.

Fix $r_1>r_2>0$ and suppose $x(\tau)$  is an integral curve
of $-V(i)$ in $D(r)$. Then
\begin{equation}\label{integralcurve-e2}
   \begin{split}\frac{d}{d\tau}|x|^2&\le
   -2c|x|^2+|G^i(x)|\,|x|\\
   &\le -2c|x|^2+ \e|x|+C_4|x|^3\\
   &\le -\frac32c|x|^2+\e|x|
\end{split}
\end{equation}
where $C_4$ depends only on $F$, provided
$r_2<\frac{c}{2C_3}$ and $|x|\le r_2$. Fix $r_2$ satisfying
these conditions. Then if $\frac {r_2}2\le |x|\le r_2$ we have
\begin{equation}\label{integralcurve-e3}
   \frac{d}{d\tau}|x|^2
\le -\frac32c|x|^2+  \frac{2\e}r|x|^2<0
\end{equation}
if $\e>0$ is small enough. Choose such an $\e$. Hence for
$i\ge i_0$ large enough any integral curve of $V(i)$
starting in $D(r_2)$ will stay inside $D(r_2)$.

Now Let $x(\tau)$ and $y(\tau)$ be two integral curves of
$V(i)$ inside $D(r_2)$. Then
\begin{equation}\label{integralcurve-e4}
   \begin{split}
     \frac{d}{dt}|x-y|^2 & =-2\sum_\alpha\lambda_\alpha
     (x_\alpha-y_\alpha)^2 +
     \sum_\alpha\lf(G^i_\alpha(x)-G^i_\alpha(y)\ri)(x_\alpha-y_\alpha)\\
       &\le -2c|x-y|^2+\sum_{\alpha}||dG^i(z_\alpha)||\,
       (x_\alpha-y_\alpha)^2\\
       &\le -2c|x-y|^2+\e
       |x-y|^2+\sum_{\alpha}||dF(z_\alpha)||\,(x_\alpha-y_\alpha)^2\\
       &\le -c|x-y|^2
   \end{split}
\end{equation}
for some $z_\alpha$ inside $D(r_2)$, provided   $r_2$ and
$\e$ are   sufficiently small, both depending only on the
constant $c$. Hence
\begin{equation}\label{integralcurve-e4}
    |x-y|^2(\tau)\le \exp(-c\tau)|x-y|^2(0)\le
    4r^2\exp(-c\tau).
\end{equation}
In particular, if $x(\tau)$ is an integral curve, and if
$\tau_2>\tau_1>0$, then $y(\tau)=x(\tau_2-\tau_2+\tau)$ is also an
integral curve. Hence $|x(\tau_1)-x(\tau_2)|^2\le
4r^2\exp(-c\tau_1)$.  Hence $x(\tau)$ will converge to a point
$x_0\in \ol{D(r_2)}$, and  by \eqref{integralcurve-e4}, we see
that any set in $D(r_2)$ will contract to $x_0$ along integral
curves. In fact, $x_0\in  \ol{D(\frac {r_2}2)}$ by
\eqref{integralcurve-e3}. Moreover, $x_0$ is  a fixed point of
$V(i)$.

 We now claim that $\Phi_i$ is injective on $D(r_2)$ for $i\ge
 i_0$.
Assume this is not the case and that for some
   $z_1\neq z_2 \in D(r_2)$ we have
   $ \Phi_i (z_1)=\Phi_i (z_2)=q\in M$.
   Let $\gamma_1$ and $\gamma_2$ be two integral curves for
   $V(i)$ starting at $z_1$ and
$z_2$ respectively.  Then by the construction of $V(i)$,
$\Phi_i(\gamma_1)$ and $\Phi_i(\gamma_2)$ are integral
curves for $W(i)$ starting from $q$.  Hence by the
uniqueness of integral curves we must have
$\Phi_i(\gamma_1(\tau))= \Phi_i(\gamma_2(\tau))$ for all
$\tau$.  On the other hand, for all $\tau$, we have
$\gamma_1(\tau)\neq \gamma_2(\tau)$ also by uniqueness of
integral curves. But  $\gamma_1(\tau)$ and $
\gamma_2(\tau)$ both converge to a $x_0 \in \bar D(r_2)$.
It is readily seen that these facts contradict the fact
that $\Phi_i$ is a biholomorphism in some neighborhood of
$x_0$. Thus we have shown that $\Phi_i$ is injective on
$D(r_2)$ for all $i\ge i_0$.

Now It is easy to see that $\Phi_i (D(r_2))$ is contained in
some geodesic ball $B_{s_i}(p,r_3)$ for some $r_3$
independent of $i$.  Thus by the injectivity of $\Phi_i$ shown above, and by the fact that $\Phi_i^*(g(s_i))$ is
uniformly equivalent to the Euclidean metric by Lemma \ref{normalcoordinates-L2}, it is easy to
see that there exists  $b>0$ such that for $i\ge i_0$ large
enough $B_{s_i}(p,r_3) $ has volume bounded below by $b$.
By the result of Cheeger-Gromov-Taylor
\cite{CheegerGromovTaylor},   $\inj_{s_i}(p)$ must be
uniformly bounded away from zero. But this  contradicts our assumption that  $\inj_{s_i}(p) \to 0$.
This completes the proof of the theorem by contradiction.
\end{proof}

Next we consider the case that $tR(p,t)$ is uniformly bounded.

\begin{thm}\label{fastdecayR} Suppose  that at some $p\in M$ we have
$t  R(p,t)\le c$  for all $t$ and some $c>0$. Then the following
are true.
\begin{itemize}

        \item [(i)] The injectivity radius of $g(t)$ at $p$
            satisfies $\inj_t(p)\ge at^{\frac12}$ for some
            $a>0$ for all $t$.
    \item [(ii)] $M^n$ is biholomorphic to $\cn$.
    \item [(iii)] Let $\tg(s)=e^{-s}g(e^s)$.  Then for any
        $s_i\to \infty$ we can find a subsequence $s_{i_k}$
        such that $(M,\tg(s_{i_k}), p)$ converges to an
        expanding gradient K\"ahler-Ricci soliton with
        positive Ricci curvature which is biholmorphic to
        $\cn$.
\end{itemize}
\end{thm}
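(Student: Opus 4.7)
The plan is to follow the template of the proof of Theorem~\ref{positiveR}, using the parabolic rescaling $\tg(s)=e^{-s}g(e^s)$, which converts \eqref{krf-e1} into
\[
\frac{\p \tg}{\p s}=-\wt\Ric-\tg.
\]
Expanding gradient \KR solitons are precisely the stationary solutions (modulo diffeomorphism) of this rescaled equation, so one expects subsequential limits under this rescaling to be expanding solitons. The hypothesis $tR(p,t)\le c$ is equivalent to $\wt R(p,s)=e^sR(p,e^s)=(tR)(p,e^s)$ being bounded above by $c$; by Cao's LYH inequality \cite{Cao92}, $tR(p,t)$ is nondecreasing in $t$, so $\wt R(p,s)$ admits a finite limit as $s\to\infty$ and $\partial_s\wt R(p,s_j)\to 0$ along some sequence $s_j\to\infty$.

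For (i), I would argue by contradiction exactly as in the proof of Theorem~\ref{positiveR}(i). Supposing the conclusion fails gives $t_i\to\infty$ with $\inj_{t_i}(p)/\sqrt{t_i}\to 0$, equivalently $\inj_{\tg(s_i)}(p)\to 0$ for $s_i=\log t_i$. Using uniform curvature bounds on $\tg$, together with \cite{CheegerGromovTaylor} and volume comparison to spread the small-injectivity condition to nearby times, one can pass to $s_i'\in[s_i,s_i+k_i]$ with $k_i\to\infty$ satisfying both $\inj_{\tg(s_i')}(p)\to 0$ and $\partial_s\wt R(p,s_i')\to 0$. Applying Lemma~\ref{normalcoordinates-L2} relative to $\tg(s_i')$ produces uniform holomorphic charts $\Phi_i:D(r)\to M$; pulling back the rescaled flow and passing to a subsequential limit yields a local solution $h$ of the rescaled equation on $D(r)$ with $\partial_sR^h(0,0)=0$. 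The expanding-soliton analogue of Theorem~2.1 of \cite{ChauTam07-1} then identifies $h(0)$ as a local expanding gradient \KR soliton with a real potential $f$ satisfying $f_{ij}=0$ and $-R^h_\ijb+h_\ijb=f_\ijb$, with the origin as an isolated zero of the induced holomorphic vector field. The integral-curve contraction argument from the proof of Theorem~\ref{positiveR} then applies with minor modifications: the linearization at the origin has eigenvalues $1-\lambda_\alpha$ rather than $-\lambda_\alpha$, but choosing the appropriate sign of the soliton vector field still yields an attracting hyperbolic fixed point in a uniform ball $D(r_2)$, so $\Phi_i$ is injective on $D(r_2)$. Volume comparison and \cite{CheegerGromovTaylor} then force a uniform positive lower bound on $\inj_{\tg(s_i')}(p)$, the desired contradiction.

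Given (i), Hamilton's compactness theorem \cite{Hamilton95-1} applied to $(M,\tg(s+s_{i_k}),p)$ supplies the subsequential pointed limits needed for (iii); the vanishing of $\partial_s\wt R$ at $(p,0)$ in the limit (inherited from the monotone bounded convergence of $\wt R(p,\cdot)$), combined with the expanding form of Theorem~2.1 of \cite{ChauTam07-1}, identifies the limit as an expanding gradient \KR soliton with positive Ricci and bounded nonnegative \hb, which is biholomorphic to $\cn$ by \cite{ChauTam05}. For (ii), the injectivity-radius lower bound from (i), together with the eigenvalue inequality for $\Ric(t)$ from \cite{ChauTam07-2} and Lemma~\ref{normalcoordinates-L2}, exhibit $M$ as the union of biholomorphic images of coordinate balls whose rescaled limits fill out $\cn$; this upgrades the pseudoconvex conclusion of Theorem~\ref{positiveR}(ii) to a global biholomorphism $M\simeq\cn$. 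The step I expect to be the main obstacle is establishing the expanding-soliton rigidity and, in particular, verifying that the shifted linearization eigenvalues $1-\lambda_\alpha$ stay bounded away from zero (or can be forced away by rescaling the limit), so that the attraction-to-fixed-point integral curve argument still yields injectivity of $\Phi_i$ on a uniform scale.
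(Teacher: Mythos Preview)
Your overall strategy is the paper's: pass to the normalized flow $\tg(s)=e^{-s}g(e^s)$, use monotone bounded convergence of $\wt R(p,s)=tR(p,t)$ to force $\partial_s\wt R(p,\cdot)\to 0$ along the limit, run the local--coordinate/vector--field contraction argument of Theorem~\ref{positiveR}(i), and then invoke compactness and \cite{ChauTam05} for (iii). Two points deserve correction.

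First, the obstacle you single out is not there: your eigenvalue computation has the wrong sign. For the flow $\partial_s\tg=-\wt\Ric-\tg$, a self--similar solution satisfies $L_{\nabla f}\tg=-\wt\Ric-\tg$, i.e. (with $f_{ij}=0$) $2f_{i\jbar}=-R_{i\jbar}-g_{i\jbar}$; equivalently, for the potential used in Proposition~\ref{volume-s3t1} one has $\text{Hess}\,f=\Ric+g$. Either way the soliton vector field solving \eqref{vectorfield-e2} has linearization at its zero with eigenvalues $-(1+\lambda_\alpha)$ (up to a factor $\tfrac12$), not $1-\lambda_\alpha$. These are bounded away from zero by $-1$, so the attracting--fixed--point/injectivity step of Theorem~\ref{positiveR}(i) goes through verbatim with an even better contraction constant; no rescaling trick is needed. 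The paper's parenthetical ``in the limit metric, the scalar curvature at the origin is constant in time'' is exactly your monotone--convergence observation.

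Second, you are implicitly assuming uniform curvature (and derivative) bounds for $\tg(s)$ on balls of fixed radius, but the hypothesis only controls $\wt R$ at the single point $p$. The paper supplies this via Lemma~\ref{fastdecayR-L1}, which uses Lemma~\ref{harnack-L1} to propagate the pointwise bound $tR(p,t)\le c$ to $tR(x,t)\le C(\rho)$ on $B_t(p,\rho\sqrt t)$ and then Shi's local estimates. You need this both for the local limit in (i) and for the compactness in (iii); in fact the paper invokes a \emph{local} compactness theorem (\cite[Corollary~3.18]{CCGGIIKLLN}) precisely because the bounds are only on balls. Finally, for (ii) your exhaustion sketch is the argument for Theorem~\ref{positiveR}(ii), which yields only a pseudoconvex domain; the paper instead observes that Lemma~\ref{fastdecayR-L1} together with (i) allows one to run the proof of Theorem~1.1 of \cite{ChauTam06-2} directly to obtain the biholomorphism to $\C^n$.
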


We will need the following lemma.
\begin{lem}\label{fastdecayR-L1} With the same assumptions as in
Theorem \ref{fastdecayR}, for any $\rho>0$  and integer $k\ge 0$,
there is a constant $C$   such that
$$
||\nabla^k\Rm(x,t)||^2_t\le \frac{C}{t^{2+k}}
$$
for all $x\in B_t(p,\rho\sqrt{t})$ and for all $t>0$, where
$\nabla$ and the norm are taken with respect to $g(t)$.
\end{lem}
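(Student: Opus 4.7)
The plan is to reduce to a scalar curvature bound using the Harnack inequality (Lemma \ref{harnack-L1}), then bootstrap to higher derivatives by a parabolic rescaling and Shi's local derivative estimates.

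For the base case $k=0$, I apply Lemma \ref{harnack-L1} with $\alpha_0=\alpha_1=\frac14$ and $T=t$, taking $y=p$: for $x \in B_t(p,\rho\sqrt t)$,
$$
R(x,t) \leq C_1\,\exp(\rho^2/4)\, R(p,3t).
$$
The hypothesis $tR(p,t)\le c$ gives $R(p,3t)\le c/(3t)$, so $R(x,t)\le C_2(\rho)/t$ on $B_t(p,\rho\sqrt t)$. Since the holomorphic bisectional curvature is nonnegative, the standard algebraic bound $\|\Rm\|^2 \leq c_n R^2$ then yields the $k=0$ statement.

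For $k\ge 1$, I perform a parabolic rescaling. For fixed $t>0$, set $\tilde g(s) = t^{-1} g(ts)$ for $s\in[\tfrac12,1]$. A direct computation using the scale-invariance of the Ricci tensor shows $\tilde g(s)$ is again a \KRF solution on $M$ with uniformly bounded nonnegative bisectional curvature. Applying the base-case bound with radius $2\rho$ at each time $t'\in[t/2,t]$, together with standard distance distortion along $g(t)$ on $[t/2,t]$ (which holds because $\|\Rm\|$ is uniformly bounded there), shows that $\|\tilde\Rm(\cdot,s)\|_{\tilde g(s)}$ is uniformly bounded on the parabolic cylinder $\tilde B_{\tilde g(1)}(p,\rho+1)\times[\tfrac12,1]$. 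Shi's local derivative estimates then yield
$$
\|\tilde\nabla^k \tilde\Rm\|^2_{\tilde g(1)}(x) \leq C_k(\rho,n) \quad \text{for all } x \in \tilde B_{\tilde g(1)}(p,\rho).
$$
Since Christoffel symbols are invariant under constant metric rescaling, $\tilde\nabla=\nabla$, and one has the scaling identity $\|\tilde\nabla^k\tilde\Rm\|_{\tilde g} = t^{1+k/2}\|\nabla^k \Rm\|_g$. This immediately gives $\|\nabla^k \Rm(x,t)\|_{g(t)}^2 \leq C_k(\rho,n)^2 / t^{2+k}$ on $B_t(p,\rho\sqrt t)$, as desired.

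The main technical point requiring care is verifying that the pointwise-in-$t'$ scalar curvature bound on $B_{t'}(p,2\rho\sqrt{t'})$ translates into a uniform $\tilde g$-curvature bound on a fixed $\tilde g$-ball for all $s\in[\tfrac12,1]$, so that Shi's estimates apply on a genuine parabolic cylinder. This reduces to an elementary distance distortion argument using the scalar (hence Ricci) curvature bound on the relevant region, so I anticipate no real difficulty; once this is done, Shi's estimates apply in the standard way and the scaling-back step is routine.
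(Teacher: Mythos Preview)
Your argument is essentially the paper's own proof: Lemma~\ref{harnack-L1} together with $tR(p,t)\le c$ for $k=0$, then a parabolic rescaling and Shi's local derivative estimates for $k\ge 1$, with a metric comparison between the initial- and final-time rescaled balls as the one delicate step you single out.

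One caution on that step: the fixed enlarged radius $2\rho$ is not enough. The rescaled curvature bound coming from the Harnack on a ball of radius $r$ is of order $\exp(r^{2}/4)$, so the metric distortion over the unit rescaled time interval is of order $\exp\bigl(C\exp(r^{2}/4)\bigr)$; hence $\tilde B_{\tilde g(1)}(p,\rho)$ need not lie inside the $\tilde g(1/2)$-ball of radius $\sqrt{2}\rho$ on which you would run Shi. The paper handles exactly this by letting the auxiliary radius $r$ depend on $\rho$ through that doubly-exponential relation (the line ``$\rho=C_4^{1/2}\exp(\tfrac12\exp(r^{2}/4))\,\tfrac{r}{2}$'') and then, given $\rho$, choosing $r$ accordingly---not by taking $r$ proportional to $\rho$. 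Once you make this adjustment your outline goes through exactly as in the paper.
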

\begin{proof} By the assumptions and Lemma
\ref{harnack-L1}, there exists a constant $C_1$ such that for all
$T>0$ and $r>0$,
\begin{equation}\label{fastdecayR-e1}
   \frac{R(x,t)}{R(p,3T)}\le C_1\exp(\frac {r^2}4)
\end{equation}
for all $x\in B_{T}(p, r\sqrt T)$ and $T\le t\le 2T$. Now consider
the metrics $\hg(s)=\frac1T g(T+sT)$. Then $\hg$ satisifies
the \KRF equation with nonnegative \hb. By the assumption that
$tR(p,t)$ is uniformly bounded and \eqref{fastdecayR-e1} and the
fact that $\hg(s)$ has nonnegative \hb, we have
\begin{equation}\label{fastdecayR-e1.5}
||\wh Rm||\le C_2\exp(\frac {r^2}4)
\end{equation}
in $\wh B_0(p, r)\times[0,2]$, where $C_2$ is constant independent
of $r$ and $T$ where $\wh \Rm $, $||\cdot||$ are the curvature
tensor and the norm with respect to $\hg$.  Thus by the local
derivatives estimates of Shi \cite{Shi89}, see also
\cite{ChowLuNi}, for any integer $k\ge 0$, there is a constant
$C_3$ which is independent of $t$ such that
\begin{equation}\label{fastdecayR-e2}
||\wh  \nabla^k\wh \Rm||\le C_3
\end{equation}
in $\hat B_0(p, \frac r2)\times\{1\}$.

On the other hand, by the \KRF equation, we conclude that
$$
C_4\exp(\exp(\frac{r^2}4))\hg(0)\le \hg(1)\le \hg(0)
$$
in $\wh B_0(p, r)$ for some positive constant $C_4$ independent of
$r$ and $T$ .  Hence we have $\wh B_1(p, \rho)\subset \wh
B_0(p,\frac r2)$ where
$\rho=C_4^\frac12\exp(\frac{1}{2}\exp(\frac{r^2}{4}))\frac r2$.

Given $\rho>0$, let $r$ be such that $\rho=C_4^\frac12\exp(\frac12
\exp(\frac {r^2}4))\frac r2$, rescale $\hg$ back to $g$, and
conclude from (\ref{fastdecayR-e2}) that
the lemma is true.

\end{proof}

Now we are ready to prove the theorem.

\begin{proof}[Proof of Theorem \ref{fastdecayR}] To prove (i), let
$\tg$ as in (iii). Then $\tg(s)$ solves the normalized \KR flow
equation:
\begin{equation}\label{fastdecayR-e3}
   \frac{\p \ttg_{i\jbar}}{\p t}=-\ttR_{i\jbar}-\ttg_{i\jbar}
\end{equation}
on $M\times(-\infty,\infty)$. By Lemma \ref{fastdecayR-L1}, the
fact that $\tg$ has positive Ricci curvature, and the fact that
the eigenvalues of $\Ric$ are nondecreasing by Theorem 6.1 in  \cite{ChauTam07-2},
we can proceed exactly as in the proof of Theorem \ref{positiveR}(i) to
show that the injectivity radius of $\tg(s)$ at $p$ is uniformly
bounded below away from zero. Rescaling back to $g(t)$, we see that
(i) is true. Note that here we use the fact that if we take a
limit along a sequence $s_i$ as in the proof of Theorem
\ref{positiveR}(i), then in the limit metric, the scalar
curvature at the origin is constant in time.

Using Lemma \ref{fastdecayR-L1} and (i), the proof of Theorem 1.1
in \cite{ChauTam06-2} can be carried over to prove that $M$ is
biholomorphic to $\cn$. \footnote{In Theorem 1.1 of
\cite{ChauTam06-2},   there exists a long time solution to \KRF
such that the estimate in Lemma \ref{fastdecayR-L1} holds for all
$x\in M$.   However, only the precise statement of Lemma
\ref{fastdecayR-L1} was actually used in the proof of the
Theorem.}

Since $\tg(s)$ is decreasing, for any $\rho>0$ and $s_0\ge 0$, the
curvature together with its derivatives are uniformly bounded on
$B_{s_0}(p,\rho)$ for all $s\ge s_0$. Combining with (i),  one
can apply the proof of the local version \cite[Corollary 3.18]{CCGGIIKLLN} of compactness of solutions
 of Hamilton \cite{Hamilton95-1}, to conclude that for any $s_i\to \infty$ we
can find a subsequence $s_{i_k}$ such that $(M,\tg(s+s_{i_k}), p)$
converges to a complete eternal solution of \eqref{fastdecayR-e3} which is
in fact an expanding gradient K\"ahler-Ricci soliton with  positive Ricci
curvature and nonnegative \hb. The fact that the limit solution is an
expanding gradient K\"ahler-Ricci soliton can be proved as in
\cite[\S2]{ChauTam07-1}. The fact that the limiting manifold is biholomorphic to
$\cn$ follows from  \cite{ChauTam05,Bryant07}.
\end{proof}

\begin{rem} Theorem \ref{fastdecayR} was proved by the authors in
\cite{ChauTam07-1} under the stronger assumption that $ tR(p, t)$
is uniformly bounded for all $t\geq 0$ independently of $p$.  In
particular, it was proved there that $M$ is biholomorphic to
$\C^n$.  We point out that the injectivity radius bound in
Theorem \ref{fastdecayR} (i) allows a direct application of the
methods in \cite{ChauTam06-2}, and thus a more direct proof of
this result.\end{rem}

By the result of \cite{Cao92}, if $tR(p,t)$ is not bounded, then
$tR(p,t)\to\infty$ as $t\to\infty$. We consider this case in the
next theorem.
\begin{thm}\label{slowdecayR} Suppose that at some $p\in M$,
$\lim_{t\to\infty}tR(p,t)=\infty$ and $\inf_tR(p,t)=0$.  Suppose also that
the eigenvalues values of $\Ric(p,t)$ are
uniformly pinched, i.e.  $ R_\ijb \ge cRg_\ijb$ at $(p,t)$  for
all $t$ for some $c>0$. Then there
exists $t_i\to\infty$ and   $a_i>0$ with $\lim_{i\to\infty}a_i=0$
such that the following are true.
\begin{itemize}
    \item [(i)]  The injectivity radius of $g(t_i)$ at $p$
        satisfies $\inj_{t_i}(p)\ge ca_i^{-1}$ for some $c>0$.
        \item [(ii)] $M^n$ is homeomorphic to Euclidean space and is
            biholomorphic to a pseduo-convex domain in $\cn$.
\item [(iii)] $(M^n,a_ig(t_i), p)$ converge to a steady
    gradient K\"aher Ricci soliton  with positive Ricci
    curvature which is biholomorphic to $\cn$.
\end{itemize}
\end{thm}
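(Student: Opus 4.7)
The plan is to reduce Theorem \ref{slowdecayR} to the setting of Theorem \ref{positiveR} by a parabolic rescaling, and then run essentially the same injectivity radius argument. First select $t_i\to\infty$ with $a_i:=R(p,t_i)\to 0$; this is possible since $\inf_t R(p,t)=0$, and by refining the selection one may further require that $t_i$ be a local minimum of $R(p,\cdot)$, giving $\p_t R(p,t_i)=0$. Set $\ttg_i(s):=a_ig(t_i+s/a_i)$; this is a \KRF with $\ttR_i(p,0)=1$ and $\p_s\ttR_i(p,0)=0$. By Cao's LYH inequality ($tR(p,t)$ is non-decreasing in $t$), for $t\le t_i$ one has $R(p,t)\le t_ia_i/t$, so on the backward window $s\in[-t_ia_i/2,0]$ one has $\ttR_i(p,s)\le 2$. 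Since $t_ia_i\to\infty$, this backward window extends to $(-\infty,0]$ in the limit.

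Next I would establish uniform $C^\infty$ bounds for $\ttg_i(0)$ on rescaled balls of fixed size around $p$. Apply Lemma \ref{harnack-L1} with $T=t_i$ and $\alpha_0=\alpha_1=\tfrac14$ to spread the bound $\ttR_i(p,s)\le 2$ spatially: a rescaled ball $B_{\ttg_i(0)}(p,\rho_0)$ is contained in a Harnack ball $B_{g(t_i)}(p,\rho\sqrt{t_i})$ exactly when $\rho\ge\rho_0/\sqrt{t_ia_i}$, and because $t_ia_i\to\infty$, the required $\rho$ tends to $0$, so the Harnack factor $\exp(\rho^2/4)\to 1$. This yields $\ttR_i(x,0)\le C(\rho_0)$ on $B_{\ttg_i(0)}(p,\rho_0)$. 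Combined with the non-negativity of the bisectional curvature this gives $|\widetilde{\Rm}_i|\le C(\rho_0)$ there, and Shi's local derivative estimates upgrade this to uniform bounds on all covariant derivatives. Applying Lemma \ref{normalcoordinates-L2} to $\ttg_i(0)$ then produces holomorphic maps $\Phi_i:D(r)\to M$ with $r$ independent of $i$, and after passing to a subsequence the pull-backs $\Phi_i^*\ttg_i$ converge in $C^\infty_{loc}$ on $D(r)\times(-\infty,0]$ to a limit $h(z,s)$. The vanishing $\p_s\ttR_i(p,0)\to 0$ together with the criterion of \cite[\S2]{ChauTam07-1} identifies $h(\cdot,0)$ as a local steady gradient \KR soliton on some $D(r_1)\subset D(r)$.

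From here, the proof of Theorem \ref{positiveR}(i) applies verbatim: construct the holomorphic vector field $Z$ on $D(r_1)$ via \eqref{vectorfield-e2}, pull back via $\Phi_i$ the unique solutions $W(i)$ of \eqref{vectorfield-e2} on $(M,g(t_i))$ to get vector fields $V(i)$ on $D(r)$ with $V(i)\to Z$ in $C^\infty_{loc}$, and run the integral-curve contraction analysis \eqref{integralcurve-e2}--\eqref{integralcurve-e4} to conclude that $\Phi_i$ is injective on a fixed $D(r_2)$ for all large $i$. Volume comparison and Cheeger--Gromov--Taylor then give $\inj_{\ttg_i(0)}(p)\ge c>0$, i.e.\ $\inj_{g(t_i)}(p)\ge c\,a_i^{-1/2}$, proving (i). Parts (ii) and (iii) follow as in Theorems \ref{positiveR}(ii) and (iv), using Hamilton's compactness at $s=0$, the eigenvalue monotonicity \cite[Thm.~6.1]{ChauTam07-2}, Brown's theorem, and the biholomorphism with $\cn$ established in \cite{ChauTam05,Bryant07}. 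The main obstacle is the second step: the intermediate case has neither a positive lower bound on $R(p,t)$ (as in Theorem \ref{positiveR}) nor the sharp decay $tR(p,t)\le C$ (which powered Lemma \ref{fastdecayR-L1}), so the spatial curvature bound at the rescaled scale must be driven purely by the quantitative divergence $t_ia_i\to\infty$, via a careful choice of Harnack parameters and a tight selection of $t_i$.
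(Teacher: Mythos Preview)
Your selection of times is the weak point, and it breaks the argument in two places. First, you cannot in general refine to local minima of $t\mapsto R(p,t)$: nothing rules out $R(p,\cdot)$ being strictly decreasing (e.g.\ behaving like $t^{-1/2}$, which is consistent with $tR(p,t)\to\infty$ and $\inf_t R(p,t)=0$), in which case there are no local minima at all and your claimed $\p_tR(p,t_i)=0$ is unavailable. Second, and more seriously, your spatial Harnack step does not give the bound you assert. Lemma~\ref{harnack-L1} compares $R(x,4\alpha_1 T)$ to $R(p,3T)$, the value at the \emph{later} time $3T$; with your normalization $a_i=R(p,t_i)$ you would obtain only $\ttR_i(x,0)\le C\exp(\rho^2/4)\,R(p,3t_i)/a_i$, and there is no control whatsoever on the ratio $R(p,3t_i)/R(p,t_i)$. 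The shrinking Harnack factor $\exp(\rho^2/4)\to 1$ is irrelevant here; it is the uncontrolled $R(p,3t_i)/a_i$ that blows the estimate.

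The paper's proof fixes both issues with a single choice: pick $T_i\to\infty$ so that $a_i:=R(p,3T_i)=\inf_{[0,3T_i]}R(p,\cdot)$ (possible since $\inf_tR(p,t)=0$ while $R>0$). This makes $a_i$ the value of $R$ at exactly the comparison time in Lemma~\ref{harnack-L1}, so the Harnack yields $R(x,t)\le C\exp(\rho^2/4)\,a_i$ directly on $B_{T_i}(p,\rho\sqrt{T_i})$; after rescaling by $a_i$ this is a uniform bound on balls of rescaled radius $\sim\sqrt{a_iT_i}\to\infty$. The same choice also gives the two-sided bound $1\le R^{(i)}(p,s)\le C_1$ on the long rescaled interval $[0,2a_iT_i]$, and then the mean value theorem (not a local-minimum assumption) produces $s_i\in[0,\tfrac14 a_iT_i]$ with $|\p_sR^{(i)}(p,s_i)|\le 8C_1/(a_iT_i)\to 0$. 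From there your third paragraph applies essentially verbatim to $h_i(t):=g_i(s_i+t)$. In short, the missing idea is to set $a_i$ equal to the running infimum at the Harnack comparison time; once you do that, both the spatial curvature bound and the vanishing of the time derivative follow without any local-minimum hypothesis.
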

\begin{proof} To prove (i),
since $\inf_tR(p, t)= 0$ and $Ric>0$ for all $t$,   we can find
$T_i\to\infty$ such that $a_i=R(p,3T_i)=\inf_{t\in
[0,3T_i]}R(p,t)$. Then $a_i>0$ and $\lim_{i\to\infty}a_i=0$.
  Now the metrics
$g_i(t)=a_ig(T_i+\frac{t}{a_i})$ solve (\ref{krf-e1}) for
$-a_iT_i\le t<\infty$.   Moreover, by  the choice of $T_i$ we have
$R^{(i)}(p,t)\ge 1$ for $-a_iT_i\le t\le 2a_iT_i$ where $R^{(i)}$
is the scalar curvature of $g_i$. On the other hand, by Lemma 1.1,
 there
 is a constant $C_1$ independent of $i$ such that  $R^{(i)}(p,t)\le
C_1$  for all $0\le t\le 2a_iT_i$. Hence there is $0\le s_i\le
\frac14a_iT_i$ such that $|\frac{\p}{\p t}R^{(i)}(p,s_i)|\le
8C_1/(a_iT_i)$. By the assumption that $tR(p,t)\to\infty$ as
$t\to\infty$, we conclude that $\lim_{i\to\infty}a_iT_i=\infty$
and
\begin{equation}\label{slowdecayR-e1}
\lim_{i\to\infty}\frac{\p}{\p t}R^{(i)}(p,s_i)=0.
\end{equation}

We now consider the sequence of metrics $h_i(t)=g_i(s_i+t)$  which
solve  (\ref{krf-e1}) for $-a_iT_i-s_i\le t<\infty$.  By Lemma
\ref{harnack-L1} and the definition of $a_i$, there is a constand
$C_2$ independent of $i$ such that
\begin{equation}\label{slowdecayR-e2}
\wh R^{(i)}(x,t) \leq C_2
\end{equation}
 for all $x\in \wh B^{(i)}_0(p,\sqrt{a_iT_i})$ and
 $0\le t\le \frac12a_iT_i$, where $\wh R^{(i)}(x,t)$ is
the scalar curvature of $h_i(t)$ and $\wh B_t^{(i)}$ is the
geodesic ball with respect to $h^{(i)}$.

 Since the eigenvalues of the  Ricci curvature of $g(t)$ at $p$
 are uniformly pinched,   the Ricci curvature of $h^i(p, 0)$ is uniformly
 bounded
from below. Noting that $a_iT_i\to\infty$ as $i\to\infty$, by
\eqref{slowdecayR-e1} and \eqref{slowdecayR-e2},  we can proceed
as in the proof of Theorem \ref{positiveR}(i) to show that the
injectivity radius of $h^i(0)$ is uniformly bounded from below.
Rescaling back to $g(t)$, using the definition of $a_i$ we see
that (i) is true.

Part (ii)  follows from part (i) as in the proof of Theorem
\ref{positiveR}.

 By the above facts, in particular that (\ref{slowdecayR-e1})
  holds on a sequence of balls with radii going to $\infty$,
  one can proceed as in the proof of  Theorem \ref{fastdecayR} (iii) to conclude that $(M, h^i(0), p)$ converges a complete steady gradient K\"ahler-Ricci soliton with  positive Ricci
curvature and nonnegative \hb.  Note that the gradient of the
function defining the gradient K\"ahler-Ricci soliton has a unique
zero by \eqref{slowdecayR-e1} and the fact that the Ricci
curvature is positive. Hence the gradient K\"ahler-Ricci soliton
is biholomorphic to $\cn$ by the results in
\cite{ChauTam05,Bryant07}.

 \end{proof}

 The methods of proof in the theorems above allow us to study
 the holomorphic isometries of $(M,g(0))$.

 \begin{thm}\label{fixedpoint-t1}   Let  $(M, g(t))$  be as in the
 {\sc Basic Assumption 1}. Suppose in addition that
 $tR(p,t)\to\infty$ and the eigenvalues of the  $Rc(p, t)$ are uniformly pinched. Let $\Gamma$ be a finite subset of the
 holomorphic isometry group of $(M,g(0))$. Then there exists $q\in
 M$ such that $q$ is fixed by any $\gamma \in \Gamma$.
\end{thm}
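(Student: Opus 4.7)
The plan is to combine the soliton--limit construction of Theorem \ref{slowdecayR} with the fact that the vector field $W(t)\in T^{(0,1)}M$ solving
\begin{equation*}
R_{,i} + R_{i\bar j}W^{\bar j} = 0
\end{equation*}
on $(M, g(t))$ is uniquely determined by the metric, and therefore preserved by every holomorphic isometry of $g(t)$. Because the \KRF preserves isometries, every $\gamma \in \Gamma$ is an isometry of $g(t)$ for every $t \ge 0$, and in particular $\gamma$ permutes the zero set of $W(t)$. A common fixed point of $\Gamma$ will be produced by showing that after rescaling $W$ has a unique zero near $p$ that every $\gamma$ must fix.

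First I would invoke Theorem \ref{slowdecayR} to extract $t_i \to \infty$ and $a_i \to 0$ such that $(M, h_i := a_i g(t_i), p)$ converges smoothly in the pointed Cheeger--Gromov sense to a steady gradient \KR soliton $(M_\infty, g_\infty, o)$ with positive Ricci curvature, on which the soliton vector field $Z_\infty = \nabla f$ has $o$ as its unique zero. Since rescaling the metric by a positive constant only rescales $W$ (the defining equation is homogeneous), $W(t_i)$ and the $h_i$--soliton vector field $W_{h_i}$ have the same zero set on $M$, and under the Cheeger--Gromov convergence $W_{h_i}$ converges smoothly on compact sets to $Z_\infty$. Adapting the ODE contraction argument from the proof of Theorem \ref{positiveR} (where one uses that the linearization of $-Z_\infty$ at $o$ has strictly negative real eigenvalues $-\lambda_\alpha \le -c < 0$ coming from the positive Ricci curvature at $o$), there exists a uniform radius $r > 0$ and, for all sufficiently large $i$, a unique point $q_i \in B_{h_i}(p, 2r)$ with $W(t_i)(q_i) = 0$; moreover $q_i \to o$ in the limit.

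Next I use that the flow is metric--decreasing when Ricci is positive, so for every $\gamma \in \Gamma$,
\begin{equation*}
d_{h_i}(p,\gamma p) = \sqrt{a_i}\, d_{g(t_i)}(p,\gamma p) \le \sqrt{a_i}\, d_{g(0)}(p,\gamma p) \longrightarrow 0.
\end{equation*}
Since $\Gamma$ is finite, one can choose $i$ so large that $d_{h_i}(p,\gamma p) < r$ for every $\gamma \in \Gamma$ and $d_{h_i}(p,q_i) < r$ simultaneously. The map $\gamma$ is an $h_i$--isometry preserving $W(t_i)$, so $\gamma(q_i)$ is again a zero, and the triangle inequality gives
\begin{equation*}
d_{h_i}(p, \gamma q_i) \le d_{h_i}(p,\gamma p) + d_{h_i}(\gamma p, \gamma q_i) = d_{h_i}(p,\gamma p) + d_{h_i}(p,q_i) < 2r.
\end{equation*}
Uniqueness of the zero in $B_{h_i}(p, 2r)$ then forces $\gamma(q_i) = q_i$ for every $\gamma \in \Gamma$, so $q := q_i$ is the desired common fixed point.

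The main obstacle is establishing the existence and uniqueness of the near--$p$ zero $q_i$ in a ball of $h_i$--radius independent of $i$. This is essentially the contraction estimate for the integral curves of $V(i)$ carried out during the proof of Theorem \ref{positiveR}; the ingredients one needs — smooth convergence of the rescaled metrics to a steady soliton with nondegenerate zero at $o$, together with the injectivity radius lower bound on $(M, h_i)$ supplied by Theorem \ref{slowdecayR}(i) — are all already in place, so the argument amounts to transcribing the perturbative ODE analysis from Theorem \ref{positiveR} to the present rescaled setting.
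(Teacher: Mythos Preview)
Your argument is along the same lines as the paper's and is essentially correct in the case you treat, but there is a gap: you invoke only Theorem~\ref{slowdecayR}, whose hypothesis includes $\inf_t R(p,t)=0$. The assumption $tR(p,t)\to\infty$ in Theorem~\ref{fixedpoint-t1} also covers the case $\inf_t R(p,t)>0$ (i.e.\ $R(p,t)\ge c>0$), which is the setting of Theorem~\ref{positiveR}. In that case the soliton limit is obtained \emph{without} rescaling (take $a_i\equiv 1$, as in Theorem~\ref{positiveR}(iv)), so your key displacement estimate
\[
d_{h_i}(p,\gamma p)=\sqrt{a_i}\,d_{g(t_i)}(p,\gamma p)\le \sqrt{a_i}\,d_{g(0)}(p,\gamma p)\longrightarrow 0
\]
no longer follows, since $\sqrt{a_i}$ does not tend to $0$ and $d_{g(0)}(p,\gamma p)$ is just a fixed positive number. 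The rest of your argument (uniqueness of the zero of $W$ near $p$, invariance of $W$ under holomorphic isometries of $g(t_i)$, the triangle inequality) is fine and matches the paper.

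The paper closes this gap by citing both Theorems~\ref{positiveR} and~\ref{slowdecayR} to produce a \emph{bounded} sequence $a_i>0$, and then, instead of using $a_i\to 0$, uses the exhaustion fact that for every $\e>0$ the balls $B_{t_i}(p,\e)$ eventually contain any fixed point of $M$ (a consequence of the eigenvalue inequality for $\Ric$ in \cite{ChauTam07-2}); since $a_i$ is bounded this gives $\gamma(p)\in B_i(p,\tfrac{a}{3})$ for large $i$, and then the same uniqueness--of--zero step forces $\gamma(q_i)=q_i$. Once you replace your $\sqrt{a_i}\to 0$ step by this exhaustion argument, your proof and the paper's coincide.
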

\begin{proof} By Theorems \ref{positiveR} and \ref{slowdecayR},
we can find $t_i\to\infty$ and a bounded sequence $a_i>0$ such
that $(M,a_ig(t_i),p)$ converge to a steady gradient
K\"ahler-Ricci soltion with positive Ricci curvature. By the proof
of Theorem \ref{positiveR}, we conclude that there exists $a>0$
such that $\nabla_iR^{(i)}$ has a unique zero at $q_i\in
 B_i(p,a)$ which lies inside $B_i(p,\frac a3)$. Here $B_i$ is the
geodesic ball, $\nabla_i$ is the covariant derivative
 and $R^{(i)}$ is the scalar curvature with respect to
 $a_ig(t_i)$.

 We claim that for any holomorphic isometry of $(M,g(0))$, there
 is $i_0$ such that $\gamma(q_i)=q_i$ for all $i\ge i_0$. It is
 easy to see that the theorem follows from this claim as $\Gamma$
 is finite.

By the uniqueness of \KRF \cite{Chen-Zhu06} (see also
\cite{Fan07}), $\gamma$ is also an isometry of $g(t)$ and hence of
$a_ig(t_i)$ for all $i$. So $\gamma(q_i)$ is also a zero of
$\nabla_iR^{(i)}$.
 Since  $q_i$ is the unique zero of $\nabla_iR^{(i)}$ in
 $B_i(p,a)$ it is easy
 to see that  the claim is true if   $\gamma$ fixes $p$.

  Suppose $\gamma(p) \neq p$. Since for any $\e>0$,
  $B_{t_i}(p,\e)$ exhaust $M$, see \cite{ChauTam07-2}, and since
  $a_i$ is uniformly bounded, we conclude that there exists $i_0$
  such that if $i\ge i_0$, then $\gamma(p)\in B_i(p,\frac a3)$. Hence
   $\gamma(q_i)\in B_i(p,a)$. Since  $q_i$ is the unique zero of $\nabla_iR^{(i)}$ in
 $B_i(p,a)$, we have $\gamma(q_i)=q_i$ for $i\ge i_0$.
 \end{proof}

 In case $tR(p,t)$ is  uniformly bounded in $t$ or if $g(t)$ is an
 eternal solution, then we have the following much stronger
 result.

\begin{thm}\label{fixedpoint-t2}   Let  $(M, g(t))$  be as in the
 {\sc Basic Assumption 1}.  Suppose in addition that either (i)  for some $p\in
 M$, $tR(p,t)$ is uniformly bounded in $t$,  or (ii)
 $g(t)$ is an eternal solution.  Then there exists $q\in
 M$ such that $q$ is fixed by any holomorphic isometry of
 $(M,g(0))$.
\end{thm}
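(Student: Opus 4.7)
The plan is to construct, in either case, a canonical holomorphic vector field on $(M,g(0))$ whose unique zero is then automatically fixed by every holomorphic isometry. Compared with Theorem~\ref{fixedpoint-t1}, the strengthened hypotheses (i) or (ii) promote the approximate rescaled soliton structure of the previous theorems to something that either lives globally on $M$ (case~(ii)) or persists uniformly in time (case~(i)), so that no finiteness assumption on the isometry subset is required.

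For case~(ii) I would first argue that $(M,g(t))$ is itself a gradient steady \KR soliton. This can be done by applying the local-soliton construction of Theorem~\ref{positiveR}(iv) along $t_i\to-\infty$, which is permissible because $g(t)$ is eternal, and then combining the uniqueness assertion in Theorem~2.1 of~\cite{ChauTam07-1} with the positivity of Ricci to globalize the equation $R_{i\bar j}=f_{i\bar j}$, $f_{ij}=0$ to a single function $f:M\to\R$. The holomorphic vector field $Z=\nabla^{(1,0)}f$ is then defined on all of $M$, and the contraction argument running from~\eqref{integralcurve-e2} to~\eqref{integralcurve-e4} applies globally, since $R_{i\bar j}>0$ everywhere, yielding a unique zero $q\in M$. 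Any holomorphic isometry $\gamma$ of $g(0)$ is also an isometry of $g(t)$ by uniqueness of \KRF~\cite{Chen-Zhu06}, so $(\gamma^*f)_{i\bar j}=f_{i\bar j}$ and $(\gamma^*f)_{ij}=0$; these force $\gamma^*f=f+c$, hence $\gamma_*Z=Z$ and $\gamma(q)=q$.

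For case~(i) I would work with the normalized flow $\tilde g(s)=e^{-s}g(e^s)$ from Theorem~\ref{fastdecayR}. On each $(M,\tilde g(s))$, positivity of Ricci lets me define the pointwise-unique $(0,1)$-vector field $W(s)$ by
\begin{equation*}
\tilde R_{,i}(s)+\bigl(\tilde R_{i\bar j}(s)+\tilde g_{i\bar j}(s)\bigr)\,W(s)^{\bar j}=0,
\end{equation*}
the expanding-soliton analogue of~\eqref{vectorfield-e2}. Theorem~\ref{fastdecayR}(iii) and the proof of Theorem~\ref{positiveR} show that for all sufficiently large $s$, $W(s)$ has a unique zero $q_s$ in a fixed $\tilde g(s)$-ball $\tilde B_s(p,a)$ about $p$, and the exponential contraction~\eqref{integralcurve-e4} makes $\{q_s\}$ Cauchy, converging to a limit $q\in M$. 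The new input is the distance-decreasing property $d_{\tilde g(s)}(p,\gamma(p))\le e^{-s/2}d_{g(0)}(p,\gamma(p))\to0$, valid because \KRF\ with positive Ricci contracts distances. This forces $\gamma(p)\in\tilde B_s(p,a/3)$ for all $s\ge s_\gamma$; then $\gamma(q_s)$ is a zero of $W(s)$ lying in $\tilde B_s(p,a)$, and uniqueness inside that ball gives $\gamma(q_s)=q_s$. Passing to $s\to\infty$ yields $\gamma(q)=q$, and since $q$ does not depend on $\gamma$, it serves as a common fixed point.

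The principal obstacle is case~(ii): promoting the local steady soliton structure of Theorem~\ref{positiveR}(iv) to a global soliton equation on all of $M$ requires a rigidity argument leveraging eternal existence, uniform curvature bounds, and positive Ricci. A secondary difficulty, in case~(i), is verifying that $\{q_s\}$ converges along the full parameter $s\to\infty$ rather than merely along subsequences; this reduces to the uniform-in-$s$ exponential contraction in~\eqref{integralcurve-e4}, made possible by the uniform injectivity radius lower bound of Theorem~\ref{fastdecayR}(i) and the curvature derivative estimates of Lemma~\ref{fastdecayR-L1}.
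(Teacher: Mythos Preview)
Your treatment of case~(ii) rests on the assertion that an eternal solution satisfying the {\sc Basic Assumption 1} is \emph{itself} a gradient steady \KR soliton on all of $M$. This is not established anywhere in the paper and does not follow from the ingredients you cite. Theorem~\ref{positiveR}(iv) produces soliton \emph{limits} along sequences $t_i\to\infty$ after passing to subsequences and possibly shifting the base time; it says nothing about the original $(M,g(0))$. Invoking it ``along $t_i\to-\infty$'' is not meaningful as stated, and even if one obtained a soliton limit in that direction, you would still need a rigidity step showing the original solution coincides with it. The relevant rigidity (Cao~\cite{Cao97}) requires the scalar curvature to attain its spacetime supremum, which is exactly what is unknown here: the LYH inequality gives $\partial_tR(p,t)\ge0$, so $R(p,t)$ is nondecreasing and bounded, but need not achieve its sup at any finite time. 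Without a global $f$, your vector field $Z$ and its unique zero do not exist on $M$.

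The paper avoids this by never claiming $(M,g(t))$ is a soliton. Instead it uses the subsequential soliton limits only to guarantee that for every $t\ge T$ the scalar curvature $R(\cdot,t)$ has a \emph{unique} critical point $q(t)$ inside $B_t(p,3a)$, with $q(t)\in B_t(p,a)$. The map $t\mapsto q(t)$ is continuous, and for each isometry $\gamma$ one has $\gamma(q(t))=q(t)$ once $t\ge T_\gamma$. The common fixed point is then $q(T)$: if $\gamma(q(T))\neq q(T)$ then $d_T(\gamma(q(T)),q(T))\ge 2a$ by uniqueness, while $d_{T_\gamma}(\gamma(q(T_\gamma)),q(T_\gamma))=0$, so by continuity some intermediate time $T_0$ has $d_{T_0}(\gamma(q(T_0)),q(T_0))=a$, contradicting uniqueness of $q(T_0)$ in $B_{T_0}(p,3a)$. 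This connectedness argument is what replaces your unjustified globalization; it also handles case~(i) in the same way via the normalized flow, so your attempt to prove that $\{q_s\}$ is Cauchy (which the contraction estimate~\eqref{integralcurve-e4} does not give, since it concerns integral curves of a fixed $W(s)$, not zeros across varying $s$) is unnecessary.
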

\begin{proof} First consider the case that $g(t)$ is an eternal
solution. Then by the monotonicity  of the eigenvalues of the
Ricci curvature in $t$ \cite{ChauTam07-2} (Theorem 6.1), the Ricci
curvature  at $p$ is uniformly bounded below away from zero for
all $t$. On the other hand, by the LYH type differential
inequality by Cao \cite{Cao97}, we know that $\frac{d}{dt}R(p, t)
\geq0$. Hence as in \cite{ChauTam07-1}, for any $t_i\to\infty$,
$(M,g(t_i),p)$ will subconverge to a gradient K\"ahler-Ricci
soliton with positive Ricci curvature and nonnegative \hb. Hence
as in the proof of Theorem \ref{fixedpoint-t1}, we can prove that
there exist  $T>0$ and some $a>0$ such that for $t\geq T$ there
exists $q(t) \in B_t(p, a)$ which is the unique point satisfying
$\nabla_t R (q(t), t) =0$ in $B_t(p, 3a)$.

Also as in the proof of Theorem \ref{fixedpoint-t1}, we can prove
that for every holomorphic isometry $\gamma$,  there exists
$T_{\gamma}$ such that $\gamma (q(t)) =q(t)$ for all $t\geq
T_{\gamma}$.

    We want to prove that  every isometry $\gamma$ fixes $q(T)$.
Suppose $\gamma(q(T))\neq q(T)$. Since $d_T(p,q(T))<a$
 by uniqueness of $q(T)$, we must have
$d_T(\gamma(q(T)), q(T))\geq 2a$.  On the other hand, we have
    $$d_{T_{\gamma}}(\gamma(q(T_{\gamma})), q(T_{\gamma}))=0$$
    and $T<T_\gamma$
     by the choice of $T_\gamma$.
  Now by uniqueness of $q(t)$ and the fact that $g(t)$ is a solution of the \KRF equation,
   it is not hard to see that $q(t)$ describes a continuous path on $M$
   with respect to $t$, and thus the same is true for $\gamma(q(t))$.
   It follows then, that for some $T<T_0<T_{\gamma}$ we must have
   $d_{T_0} (\gamma(q(T_0)), q(T_0))=a$.
   But this would violate the fact that
   $q(T_0)$ is the unique zero of $\nabla_{T_0}R$ in $B_{T_0}(p,
   3a)$. This proves the case when $g(t)$ is an eternal solution.

   The case that $t(R(p,t)$ is uniformly bounded in $t$ can be
   proved similarly using Theorem \ref{fastdecayR}.

\end{proof}

By the results of Shi \cite{Shi97} (see also \cite{ChauTam07-1}), if $(M, g)$ has bounded non-negative holomorphic bisectional curvature with average quadratic curvature decay and positive Ricci somewhere on $M$, then there exists a long time solution to \KRF such that $tR(p, t)$ is uniformly bounded independently of $p$.  We thus have the following immediate Corollary of Theorem \ref{fastdecayR} and \ref{fixedpoint-t2}:

\begin{cor}\label{quaddecaypositivericci}
Let $(M, g)$ be a complete non-compact \K manifold with non-negative holomorphic bisectional curvature.  Suppose $k(x, r) \leq C/(1+ r^2)$ for some $C$ and all $r$ and $x$, and that $Rc(p)>0$ for some $p\in M$.   Then $M$ is biholomorphic to $\C^n$, and there exists $q\in M$ such that $q$ is fixed by any holomorphic isometry of $(M, g)$.
\end{cor}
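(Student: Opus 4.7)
The plan is to apply Theorems \ref{fastdecayR} and \ref{fixedpoint-t2} to the K\"ahler-Ricci flow produced by the cited result of Shi. From $k(x,r)\le C/(1+r^2)$, letting $r\to 0$ gives $R\le C$ pointwise, which together with non-negative bisectional curvature yields bounded bisectional curvature; so \cite{Shi97} (as used in \cite{ChauTam07-1}) furnishes a complete long-time solution $g(t)$ on $M\times[0,\infty)$ with $g(0)=g$, uniformly bounded non-negative bisectional curvature in spacetime, and $tR(q,t)\le c$ uniformly in $(q,t)$. Since $\Ric(p,0)>0$ while the bisectional curvature stays non-negative along the flow, the Bando--Mok strong maximum principle for the Ricci tensor along \KRF forces $\Ric(g(t))>0$ everywhere on $M$ for every $t>0$. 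After a harmless time shift by some $t_0>0$, the shifted flow satisfies Basic Assumption 1 and still obeys $tR(p,t)\le c'$ for a new constant $c'$.

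Theorem \ref{fastdecayR}(ii) then applies directly to the shifted flow and gives $M$ biholomorphic to $\cn$; Theorem \ref{fixedpoint-t2}(i), whose hypothesis is precisely such a uniform-in-$t$ bound on $tR(p,t)$, provides a point $q\in M$ fixed by every holomorphic isometry of $(M,g(t_0))$.

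It remains to promote $q$ to a fixed point for holomorphic isometries of the original metric $g=g(0)$. As in the proof of Theorem \ref{fixedpoint-t1}, if $\gamma$ is any holomorphic isometry of $(M,g)$, then $\gamma^*g(t)$ is another complete \KRF with bounded curvature and initial data $\gamma^*g=g$; the uniqueness result of \cite{Chen-Zhu06,Fan07} then forces $\gamma^*g(t)=g(t)$ for all $t\ge 0$, so in particular $\gamma$ is a holomorphic isometry of $g(t_0)$ and therefore fixes $q$. The only substantive point in the whole argument is verifying that Shi's flow simultaneously fulfills the hypotheses of Theorems \ref{fastdecayR} and \ref{fixedpoint-t2} (positivity of Ricci along the flow plus a global bound on $tR$); everything else is a direct assembly of the machinery already built in Section 2.
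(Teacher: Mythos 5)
Your proposal is correct and takes essentially the same route as the paper: Shi's long-time solution with $tR$ uniformly bounded, followed by Theorems \ref{fastdecayR} and \ref{fixedpoint-t2}. The paper states the corollary as immediate, whereas you explicitly supply the intermediate steps it leaves implicit (boundedness of curvature from the decay hypothesis, positivity of Ricci for $t>0$ via the strong maximum principle, the time shift to meet the {\sc Basic Assumption 1}, and the transfer of isometries of $g(0)$ to $g(t_0)$ by uniqueness of the flow, which is the same mechanism used in the proofs of Theorems \ref{fixedpoint-t1} and \ref{fixedpoint-t2}).
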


\section{Fiber bundle structures}

In this section we study structure of complete noncompact K\"ahler
manifold with nonnegative \hb\ using the \KR flow.  To motivate
our discussion, let us consider an example. Take a complete
noncompact K\"ahler metric $h$ on $\cn$ with positive \hb\  which
is $U(n)$ invariant, see \cite{Cao94,Cao97,WuZheng-pp}. Let
$\tM=\C^l\times \cn$ ($l\ge1$ with metric $\wt g=g_e\times h$
where $g_e$ is the standard metric on $\C^l$. Let $k$ be a nonzero
real number. Define $\gamma:\tM\to\tM$ as
$\gamma(\zeta,z)=(\zeta_1+k,\zeta_2,\dots,\zeta_l, e^{ 2\pi
k\ii}z)$ where $\zeta\in \C^l$ and $z\in \cn$. Then $\gamma$ is a
holomorphic isometry of $(\tM,\wt g)$ and $\gamma^m$ has no fixed
point unless $m=0$. Let $\Gamma$ be the cyclic group generated by
$\gamma$.  Then $(M,g)=(\tM,\wt g)/\Gamma$ is a complete noncompact
manifold with nonnegative \hb, which is a vector bundle over
$\C^l/\Gamma'$ where $\Gamma'$ is the group generated by $\gamma'$
which sends $(\zeta_1,\zeta_2,\dots,\zeta_l)$ to
$(\zeta_1+k,\zeta_2,\dots,\zeta_l)$.

Now let  $(M, g(t))$ be long time solution of the \KRF \eqref{krf-e1}
on complete noncompact K\"ahler manifold with bounded nonnegative
\hb\ yet without any assumption on the positivity of Ricci
curvature. If $(\tM, \tg(t))$ is the universal cover of $M$ with
corresponding lift $\tg(t)$, then $\tg(t)$ is also such a solution
to \KRF.  As mentioned in the introduction, $(\tM, \tg(t))$ splits
for each $t>0$ as $(\C^k \times N, g_e \times h(t))$ where $g_e$
is the standard Euclidean metric on $\C^n$ and $h(t)$ solves
(\ref{krf-e0}) and has positive Ricci curvature for $t>0$.  We
will use our results from \S2, for the factor $(N, h(t))$,
together with the arguments in \cite[\S 5]{NiTam03} to provide a
fiber bundle description of $M$.  In Corollary 3.1 we treat the
particular case of an initial complete non-compact \K manifold
having bounded non-negative \hb\ which decays quadratically in the
average sense.

We begin with the following

{\sc Basic assumption 2:} Let $M^n$ be a noncompact complex
manifold and let $(M^n,g(t))$  be a complete solution of the \KRF
\begin{equation}
   \frac{\p g_{i\jbar}}{\p t}=-R_{i\jbar}
\end{equation}
on $M\times[0,\infty)$ with uniformly bounded nonnegative
holomorphic bisectional curvature in spacetime, so that one of the
following is true:

\begin{itemize}
    \item [Case 1:] For some $p\in M$ (and hence for all
        point), $tR(p,t)$ is uniformly bounded in $t$.
    \item [Case 2:] For some point $p\in M$,
        $\lim_{t\to\infty}tR(p,t)=\infty$ and the positive
        eigenvalues of $\Ric(t)$ at $p$ are uniformly pinched
        for all $t$.
\end{itemize}

  \begin{thm}\label{fiberbundle}  With the {\sc Basic Assumption 2}, there is $k\ge0$
  such that for all $t$, $(M,g(t))$ is a holomorphic Riemannian
  fiber bundle over $\C^k/\Gamma$ where $\Gamma$ is a discrete
  subgroup of holomorphic isometries of $\C^k$, with fiber being
  $\C^{n-k}$ in Case 1 and a pseudoconvex domain
  in $\C^{n-k}$ in Case 2.  If in Case 2, $g(t)$ is in fact an eternal solution, then the fiber is also $\C^{n-k}$.
  \end{thm}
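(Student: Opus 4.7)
The plan is to lift to the universal cover, apply Cao's splitting theorem \cite{Cao04} to reduce the positive-Ricci analysis to the setting of \S2, and then use the fixed-point results Theorems \ref{fixedpoint-t1} and \ref{fixedpoint-t2} to pin down the structure of the deck transformation group.

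First, pass to the universal cover $\pi:(\tM,\wt g(t))\to(M,g(t))$, which is itself a long-time \KRF\ solution with uniformly bounded non-negative \hb. By Cao's splitting theorem, $(\tM,\wt g(t))$ decomposes holomorphically and isometrically as $(\C^k\times N,\ g_e\times h(t))$ for some integer $0\le k\le n$, where $g_e$ is the flat K\"ahler metric on $\C^k$ and $(N,h(t))$ is a long-time \KRF\ solution with bounded non-negative \hb\ and positive Ricci curvature for $t>0$. Since $\Ric_{g_e}\equiv 0$, the positive eigenvalues of $\Ric_{\wt g(t)}$ at any preimage of $p$ coincide with the eigenvalues of $\Ric_{h(t)}$ on the $N$-factor, so $(N,h(t))$ satisfies {\sc Basic Assumption 1}; the hypotheses of Theorem \ref{fastdecayR} hold in Case 1, while those of Theorem \ref{slowdecayR} hold in Case 2. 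These identify $N$ as biholomorphic to $\C^{n-k}$ in Case 1 and to a pseudoconvex domain in $\C^{n-k}$ in Case 2, while the eternal-solution argument contained in Theorem \ref{fixedpoint-t2} upgrades the conclusion in the Case 2 eternal subcase to $N\cong\C^{n-k}$.

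Next, analyze the deck transformation group $\pi_1(M)\subset\mathrm{Isom}(\tM)$. Since the splitting $\tM=\C^k\times N$ is canonical, recovered from the parallel null-Ricci distribution, every $\gamma\in\pi_1(M)$ preserves it and decomposes as $\gamma=(\gamma_1,\gamma_2)$ with $\gamma_1$ a holomorphic isometry of $(\C^k,g_e)$ and $\gamma_2$ a holomorphic isometry of $(N,h(0))$. Theorems \ref{fixedpoint-t1} and \ref{fixedpoint-t2} applied to $(N,h(0))$ show that every single holomorphic isometry of $N$ has a fixed point. If some $\gamma\ne\mathrm{id}$ had $\gamma_1$ with a fixed point $\zeta_0\in\C^k$, then choosing a fixed point $q\in N$ of $\gamma_2$ would make $(\zeta_0,q)$ a fixed point of $\gamma$, contradicting freeness of the deck action; in particular $\gamma_1=\mathrm{id}$ forces $\gamma=\mathrm{id}$. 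Thus $\gamma\mapsto\gamma_1$ embeds $\pi_1(M)$ as a discrete subgroup $\Gamma$ of the holomorphic isometry group of $\C^k$ acting freely on $\C^k$.

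Finally, the projection $\C^k\times N\to\C^k$ is $\pi_1(M)$-equivariant and descends to a holomorphic Riemannian submersion $M\to\C^k/\Gamma$; following the reasoning of \cite{NiTam03}, local trivializations obtained from small fundamental domains for the free $\Gamma$-action exhibit this as a fiber bundle with fiber $(N,h(t))$, yielding the stated structure. I expect the main obstacle to be the combined step of showing both that the kernel of $\gamma\mapsto\gamma_1$ is trivial and that $\Gamma$ acts freely on $\C^k$; both issues reduce to the existence of fixed points for individual holomorphic isometries of $(N,h(0))$, which is exactly what the \S2 fixed-point theorems provide.
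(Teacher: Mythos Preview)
Your proposal is correct and follows the same overall architecture as the paper (lift, split via Cao, analyze the deck group, descend via \cite{NiTam03}), but it diverges from the paper's argument at two points. First, to show each deck transformation splits as $(\gamma_1,\gamma_2)$, you invoke invariance of the null-Ricci distribution; the paper instead writes $F=(f_1,f_2)$ and proves $f_2$ is independent of the $\C^k$-variable by a line argument (a line in $\C^k\times\{\ty\}$ maps to a line, whose $\tN$-projection would force a Cheeger--Gromoll splitting of $\tN$, contradicting positive Ricci), and then deduces that $f_1$ is independent of $\ty$ from the fact that $f_2$ is already an isometry. Second, and more substantively, to prove that $\gamma\mapsto\gamma_1$ is injective with fixed-point-free image you appeal directly to Theorems \ref{fixedpoint-t1} and \ref{fixedpoint-t2}: any single $\gamma_2$ has a fixed point in $\tN$, so a fixed point of $\gamma_1$ would produce one for $\gamma$. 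The paper does not use the fixed-point theorems here; instead, given a hypothetical fixed point $\tx_0$ of some $f_1'$, it forms the subgroup $G=\{f_2:(f_1,f_2)\in\Gamma,\ f_1(\tx_0)=\tx_0\}$, observes $G$ acts freely on $\tN$, and notes that $\tN/G$ still satisfies {\sc Basic Assumption 1} and is therefore simply connected by Theorems \ref{positiveR}, \ref{fastdecayR}, \ref{slowdecayR}, forcing $G$ trivial. Your route is shorter and more transparent; the paper's route avoids invoking the fixed-point results and uses only the simple-connectedness output of \S2. Two small corrections: in Case 2 you should also cite Theorem \ref{positiveR} alongside Theorem \ref{slowdecayR} (Case 2 includes the subcase $R(p,t)\ge c>0$, where $\inf_t R(p,t)\neq 0$), and for the eternal subcase of Case 2 the relevant reference for $\tN\cong\C^{n-k}$ is \cite{ChauTam07-1}, not Theorem \ref{fixedpoint-t2}.
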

\begin{proof} Let $\tM$ be the universal cover of $M$.
By the splitting result of Cao \cite{Cao04}  we have
$\tM=\C^k\times \tN$ and $\ttg(t)=g_\e(t)\times \tth(t)$ where
$g_\e(t)$ is flat and $\tth(t)$ satisfies the {\sc Basic
Assumption 1} of \S 1 on $\tN$.  In particular, in Case 1 $\tN$ is
biholomorphic to $\C^{n-k}$ by Theorem \ref{fastdecayR}, and in
Case 2 $\tN$ is homeomorphic to the Euclidean space and
biholomorphic to a pseudoconvex domain in $\C^{n-k}$ by Theorem
\ref{positiveR} and \ref{slowdecayR}. Moreover, if in Case 2,
$g(t)$ and hence $\tth(t)$ is in fact an eternal solution, then by
\cite{ChauTam07-1}  $\tN$ is biholomorphic to $\C^n$.

Let $F\in \Gamma$, the first fundamental group of $M$ with respect
to the metric $g(0)$. By the uniqueness of \KRF (see
\cite{Chen-Zhu06,Fan07}), $F$ is an holomorphic isometry with
respect to $g(t)$ for all $t$. As a mapping on $\tM$, $F(\tx,
\ty)=(f_1(\tx, \ty),f_2(\tx, \ty))$ where $f_1:\tM\to \C^k$,
$f_2:\tM\to \tN$, $\tx\in \C^k$ and $\ty\in \tN$.

The following argument is basically as in \cite[\S 5]{NiTam03},
using the fact that $\tN$ is biholomorphic either to $\C^{n-k}$ or
a pseudoconvex domain in  $\C^{n-k}$ which is homeomorphic to
$R^{2(n-k)}$.

 We first prove $f_2$ is independent of $\tx$.
Suppose   there is $\tx_1\neq \tx_2$ and $\ty$ such that
$f_2(\tx_1,\ty)\neq f_2(\tx_2,\ty)$. Let $\alpha$ be a line
passing through $\tx_1$  and $\tx_2$ in $\C^k$. Then
$\alpha(s)=(\gamma(s),\ty)$ is a line in $\tM$. That is to say,
$\alpha$ is a geodesic which is minimizing between any two points
in $\alpha$.  Hence $F\circ \alpha(s)$ is a line of $\tM$ too. So
its projection onto $\tN$ is minimizing between any two end points
as a curve in $\tN$. But the projection is $f_2\circ \alpha$,
which is not a constant curve because $f_2(\tx_1,\ty)\neq
f_2(\tx_2,\ty)$. Hence it must be a line in $\tN$. But this
implies a factor $\C$ is splitted from $\tN$ by
\cite{CheegerGromoll71}. This is impossible because $\tN$ has
positive Ricci curvature. Thus the claim is established.

Next we want to prove that $f_1$ is independent of $\ty$. First
note that $F^{-1}=(h_1,h_2)$ with $h_2$ independent of $\tx$.
Hence $f_2\circ h_2$ is identity and $h_2=f_2^{-1}$. On the other
hand, since $F$ is an isometry, $f_2$ cannot increase length. For
if $\gamma(s)$ is a curve in $\tN$, then the length for the curve
$\alpha(s)=(\tx,\gamma(s))$ and the length of $F\circ
\alpha(s)=(f_1(\tx,\gamma(s)), f_2(\gamma(s))$ are equal. But the
length of $\alpha$ is the same as the length of $\gamma$. So the
length $f_2(\gamma(s))$ cannot be larger than the length of
$\gamma$. But this is also true for $f_2^{-1}$. Hence $f_2$ is an
isometry. If this is the case, then $f_1(\tx,\gamma(s))$ must be a
point. Hence $f_1$ is independent to $\ty$.

Let $\Lambda_1 =\{f_1| (f_1, f_2)\in \Gamma \text{ for some
$f_2$}\}.$ Then $\Lambda_1$ is a subgroup of the holomorphic
isometries of $(\C^k,g_\e(t))$ for all $t$. We claim that
$\Lambda_1$ is fixed point free. Suppose there is
$F'=(f_1',f_2')\in \Gamma$  such that $f_1'(\tx_0)=\tx_0$ for some
$\tx_0$.   Let $$G=\{f_2\in |\ F=(f_1, f_2)\in \Gamma \text{\ for
some $f_1$ with $f_1(\tx_0)=\tx_0$}\}.
$$
Then $G$ is a subgroup of the group of the holomorphic isometries of $\tN$.
$G$ acts on $\tN$ freely because $\Gamma$ acts on $\tM$ freely.
Now $\tth$ descends to $\tN/G$ while still satisfying the
{\sc Basic Assumption 1} in \S 1, and thus must be simply connected
by Theorems \ref{positiveR},\ref{fastdecayR} and \ref{slowdecayR} which leads to a contradiction if $G$ is not trivial.  Thus $f_1'$ must be trivial and $\Lambda_1$
is fixed point free.

Let $\beta$ be the  homomorphism from  $\Gamma$ to $\Lambda_1$
defined by $\beta(F)=f_1$, for $F=(f_1,f_2)$. Suppose
$F=(f_1',f_2')\in \Gamma$ and $f_1'$ is the identity map. Then by
similar argument, we can prove that the group:

$$
\{f_2|\  (f_1',f_2)\in \Gamma\}
$$
is trivial. Hence $\beta$ is an isomorphism. Then $\Lambda_1$ acts
on $\tM$ isometrically holomorphically and freely by
$f(\tx,\ty)=(f(\tx),\rho_2\circ\beta^{-1}(f)(\ty))$ where $\rho_2$
is the homomorphism given by mapping $F=(f_1,f_2)\in \Gamma$ to
$f_2$.

 Hence
as shown in    \cite[\S5]{NiTam03},  $M$ is a Riemannian and holomorphic
fiber bundle over $\C^k/\Lambda_1$ with fiber $\tN$.  By the comments in the first paragraph of the proof, we see that this completes the proof of the Theorem.
\end{proof}

  \begin{thm}\label{linebundle} If $n-k=1$ in the previous Theorem,  then $(M,g(t))$ is a holomorphic Riemannian
  line bundle over $\C^k/\Gamma$
  \end{thm}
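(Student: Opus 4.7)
Following the proof of Theorem \ref{fiberbundle}, lift to the universal cover $\tM=\C^k\times\tN$, $\ttg(t)=g_\e(t)\times\tth(t)$, and write the deck action of $\Gamma=\pi_1(M)$ diagonally as $\gamma\cdot(\tx,\ty)=(f_1(\gamma)(\tx),\,\rho_2(\gamma)(\ty))$, where $\Lambda_1:=\beta(\Gamma)\subset\mathrm{Iso}(\C^k)$ acts freely and each $\rho_2(\gamma)$ is a holomorphic isometry of $(\tN,\tth(t))$ for every $t$. When $n-k=1$, Theorem \ref{fiberbundle} gives that $\tN$ is biholomorphic to $\C$ (in Case 1 or in the eternal subcase of Case 2), or to a simply connected pseudoconvex open subset of $\C$ (in the remaining subcase of Case 2), which by the Riemann mapping theorem is $\C$ or the unit disk $\Delta$. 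The argument reduces to producing a common fixed point $q\in\tN$ of $\rho_2(\Gamma)$: with such a $q$ chosen, take a uniformizing holomorphic coordinate $w$ on $\tN$ centered at $q$; each $\rho_2(\gamma)$ then becomes a biholomorphism of $\C$ or $\Delta$ fixing the origin, which is \emph{automatically} linear, $w\mapsto\lambda_\gamma w$ with $\lambda_\gamma\in\C^{*}$ or $\lambda_\gamma\in U(1)$. The diagonal $\Gamma$-action on $\C^k\times\tN$ is then fiberwise linear, so $M\to\C^k/\Lambda_1$ inherits a holomorphic line bundle structure, and the Hermitian structure furnished by $\tth(t)$ gives the Riemannian part of the conclusion for every $t$.

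\textbf{The common fixed point.} Since the $\C^k$-factor is flat, $(\tN,\tth)$ satisfies the {\sc Basic Assumption 1} of \S2 and inherits whichever alternative of {\sc Basic Assumption 2} holds on $M$. In Case 1 and in the eternal subcase of Case 2, Theorem \ref{fixedpoint-t2} applied directly to $(\tN,\tth(0))$ produces a point fixed by every holomorphic isometry, and we take $q$ to be that point. In the remaining non-eternal subcase of Case 2, Theorem \ref{fixedpoint-t1} supplies only a common fixed point for each \emph{finite} subset of $\rho_2(\Gamma)$, so a bootstrap is needed. The bootstrap uses complex dimension one: every non-identity holomorphic isometry $\phi$ of $(\tN,\tth(0))$ has a \emph{unique} fixed point. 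Indeed, if $\tN\cong\C$, then $\phi(z)=az+b$, and the Cohn-Vossen inequality forbids a non-trivial translation (whose quotient would be a complete surface of positive Gauss curvature and Euler characteristic zero), so $a\neq 1$ and $\phi$ has the unique fixed point $b/(1-a)$; if $\tN\cong\Delta$, the same inequality excludes parabolic and hyperbolic M\"obius isometries (whose free quotients would be a punctured disk or a cylinder), so $\phi$ must be elliptic with a unique interior fixed point. Applying Theorem \ref{fixedpoint-t1} to each pair in $\rho_2(\Gamma)$ and invoking this uniqueness forces every non-identity element of $\rho_2(\Gamma)$ to fix the \emph{same} point, which is our $q$.

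\textbf{Main obstacle.} The delicate step is exactly the bootstrap from pairwise to universal fixed points in the non-eternal Case 2, where Theorem \ref{fixedpoint-t2} is unavailable. The crucial input is the Cohn-Vossen-based rigidity of isometries of complete positively curved Riemann surfaces, which is what makes the line-bundle conclusion specific to $n-k=1$ and is not expected to extend to higher-dimensional fibers.
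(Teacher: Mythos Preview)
Your argument is correct and follows essentially the same route as the paper: reduce to finding a common fixed point for $\rho_2(\Gamma)$ on the one--dimensional fiber $\tN$, exploit that a nontrivial automorphism of $\C$ (or $\Delta$) with a fixed point has a \emph{unique} one, and then combine this uniqueness with the finite--set fixed--point theorem to upgrade pairwise common fixed points to a universal one; finally recoordinatize at that point so that every $\rho_2(\gamma)$ becomes linear.

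Two remarks on the comparison. First, the disk alternative never occurs: since $(\tN,\tth)$ is a complete noncompact simply connected Riemann surface with $K>0$, Cohn--Vossen gives $\int_{\tN}K\le 2\pi$, hence finite total curvature, and Huber's theorem forces $\tN$ to be conformally $\C$. The paper therefore simply asserts $\tN\cong\C$ and works only with affine maps $z\mapsto az+b$. Your treatment of the $\Delta$ case is correct but superfluous. Second, where you invoke Cohn--Vossen on the cylinder quotient to exclude translations, the paper instead applies Theorem~\ref{fixedpoint-t1} to the singleton $\{f_2\}$ to see that $f_2$ has a fixed point, hence $a\neq 1$ when $f_2$ is nontrivial; your careful split into Theorem~\ref{fixedpoint-t2} (Case~1 and the eternal subcase) versus Theorem~\ref{fixedpoint-t1} (the remaining Case~2) is in fact cleaner than the paper's blanket citation of Theorem~\ref{fixedpoint-t1}, which strictly speaking requires $tR(p,t)\to\infty$.
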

 \begin{proof} Let $\Gamma$ and $\tN$ as in the proof of previous
 theorem. Then $\tN$ is bihlomorphic to $\C$ and has a global holomorphic coordinate $z$.
 Thus for any $F\in
 \Gamma$ with $F=(f_1,f_2)$, $f_2(z)=az+b$ for some $a\neq 0$ and $b\in \C$.
  Now by Theorem \ref{fixedpoint-t1}, any $f_2$
 has a fixed point.  It follows that any nontrivial $f_2$ with $F=(f_1,f_2)\in \Gamma$,
  has a
 unique fixed point.  By Theorem \ref{fixedpoint-t1} again, we conclude that
 there is $z_0\in \tN=\C$ such that $f_2(z_0)=z_0$ for any holomorphic isometry
 $f_2$ with $ F=(f_1,f_2)\in \Gamma$.
  We see that we can choose a global holomorphic coordinate $\tilde{z}$ on $\tN=\C$  such that in this coordinate,
  every such $f_2$ has the form $f_2(\tilde{z})=a \tilde{z}$ for some complex constant $a$.
  Thus in this case, the fiber bundle structure constructed in
  the proof of Theorem \ref{fiberbundle}
  is in fact a line bundle structure.  This completes the proof of the Theorem.
 \end{proof}

By the remarks preceding Corollary \ref{quaddecaypositivericci}, we may have the following immediate corollary to Theorems \ref{fiberbundle} and \ref{linebundle}:
\begin{cor}\label{quaddecay}
Let $(M, g)$ be a complete non-compact \K manifold with non-negative holomorphic bisectional curvature.  Suppose $k(x, r) \leq C/(1+ r^2)$ for some $C$ and all $r$ and $x$.    Then there is $k\ge0$
  such that $(M,g)$ is a holomorphic Riemannian
  fiber bundle over $\C^k/\Gamma$ where $\Gamma$ is a discrete
  subgroup of holomorphic isometries of $\C^k$, with fiber being
  $\C^{n-k}$.

  Moreover, if $n-k =1$ then the above fiber bundle is in fact a line bundle.
  \end{cor}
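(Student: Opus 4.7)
The plan is to reduce Corollary \ref{quaddecay} to Theorems \ref{fiberbundle} and \ref{linebundle} by running the Kähler-Ricci flow from $g$ and verifying that we land in Case 1 of the \textsc{Basic Assumption 2}.

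First I would produce a complete long-time solution $g(t)$ of the Kähler-Ricci flow \eqref{krf-e1} on $M \times [0, \infty)$ with $g(0) = g$, having uniformly bounded non-negative holomorphic bisectional curvature in spacetime and satisfying $tR(x, t) \le C'$ uniformly on $M \times [0, \infty)$. This is exactly the output of the results of Shi \cite{Shi97} recalled in the remark preceding Corollary \ref{quaddecaypositivericci}: the hypothesis $k(x, r) \le C/(1+r^2)$ yields long-time existence with bounded curvature along the flow, and a Li-Yau-Hamilton type argument as in \cite{ChauTam07-1} upgrades the decay of $k$ to the pointwise bound $tR \le C'$. The corner case where $R \equiv 0$ is trivial: bounded non-negative \hb\ with vanishing trace forces the bisectional curvature to vanish identically, so $g$ is flat, $M = \C^n/\Gamma$, and the fiber bundle conclusion holds vacuously with $k = n$ and trivial fiber.

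Second, once such a flow exists, $(M, g(t))$ satisfies Case 1 of \textsc{Basic Assumption 2} at every point $p \in M$. Theorem \ref{fiberbundle} then applies directly to $(M, g(t))$ and gives, for $(M, g) = (M, g(0))$, the structure of a holomorphic Riemannian fiber bundle over $\C^k/\Gamma$, where $\Gamma$ is a discrete subgroup of holomorphic isometries of $\C^k$ and the fiber is $\C^{n-k}$. The fact that the fiber is $\C^{n-k}$ (rather than merely a pseudoconvex domain) is precisely the output of Case 1, via Theorem \ref{fastdecayR} applied to the non-flat factor in Cao's splitting of the universal cover. In the case $n - k = 1$, Theorem \ref{linebundle} applies verbatim to the same flow, promoting the fiber bundle to a holomorphic line bundle structure over $\C^k/\Gamma$.

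The main (mild) obstacle is the verification of the pointwise bound $tR(x, t) \le C'$ from the averaged decay of $k(x, r)$; this is not a content of the present section but is cited from Shi and from \cite{ChauTam07-1}. All remaining steps are direct invocations of the earlier theorems, so once this bound is in hand the corollary follows immediately.
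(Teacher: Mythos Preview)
Your proposal is correct and matches the paper's own argument: the paper states the corollary as an immediate consequence of Theorems \ref{fiberbundle} and \ref{linebundle} via the remarks preceding Corollary \ref{quaddecaypositivericci} (i.e., Shi's long-time existence with the uniform bound $tR \le C'$ placing the flow in Case 1 of \textsc{Basic Assumption 2}). Your explicit treatment of the flat case $R\equiv 0$ is harmless extra care; in the paper this is absorbed into the $k=n$ case of Theorem \ref{fiberbundle}.
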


\section{Gradient K\"ahler-Ricci Solitons}
In this section we refine the results in the previous section in the case of a steady gradient \KR  soliton with bounded and
 nonnegative holomorphic bisectional curvature. We remark that in the case
 of an expanding gradient \KR soliton satisfying the same curvature
 conditions,
  we have $M$ biholomorphic to $\C^n$ by
 \cite{ChauTam05}, see also \cite{Bryant07}.

   First let us fix
 some notations. Let $( M^n, g)$ be a complete steady gradient
 \KR Ricci soliton with bounded and
 nonnegative holomorphic bisectional curvature. Namely, there is a real
 valued function $f$ such that
  \begin{equation}\label{1.e4}
\begin{split}
f_{i\jbar }&=   R_{i\jbar} \\
f_{ij}&=0.
\end{split}
\end{equation}
 Let $(\tM,\ttg)$
 be the universal cover of $(M^n,g)$. By \cite{Cao04}, $\tM=\C^k\times \tN$
 and $\ttg=g_e\times \tth$,   where $g_e$ is the standard metric
 and $(\tN,\tth)$ is a steady gradient \KR soliton with bounded
 nonnegative holomorphic bisectional curvature and with positive
 Ricci curvature. Hence
there exists some smooth real valued function $f$ satisfying
\eqref{1.e4} on $\tN$. Now let us also assume that the scalar
curvature of $M$ attains its maximum at some point. Hence the same
is true for $\tN$. Then by \cite{CaoHamilton2000} $\nabla f$(the real gradient of $f$ relative to $\tilde{h}$) has a
unique zero at some $\tilde p$. Let
$\lambda_1\ge\lambda_2\ge\dots\ge\lambda_l>0$ be the eigenvalues of
the Ricci curvature at $\tilde p$, where $l=n-k$. Recall that
$\tN$ is biholomorpic to $\C^l$ by \cite{ChauTam05},
\cite{Bryant07}. Let $\Gamma$ be the fundamental group of $M$.
Then by the proof of Theorem \ref{fiberbundle}, each $F$ in
$\Gamma$ is of the form $(\gamma_1,\gamma_2)$ where $\gamma_1$ is
an holomorphic isometry of $\C^k$ and $\gamma_2$ is a holomorphic
isometry of $(\tN,\tth)$. Let
$$
\Lambda_1=\{\gamma_1|\ (\gamma_1,\gamma_2)\in\Gamma\
\text{\rm for some $\gamma_2$}\},
$$
and
$$
\Lambda_2=\{\gamma_2|\ (\gamma_1,\gamma_2)\in\Gamma\
\text{\rm for some $\gamma_1$}\}.
$$

We want to describe the holomorphic isometries in $\Lambda_2$. In
fact, we want to describe any holomorphic isometry of $\tN$ in general. It
was shown by Bryant \cite{Bryant07} that there is a global
holomorphic coordinates $z^i$ of $\tN$, ranging over $\C^l$, which linearize the
holomorphic vector field  $Z=\frac12\lf(\nabla f-\ii J(\nabla
f)\ri)$. That is to say,
$$
Z=\sum_{i=1}^l\lambda_i\frac{\p}{\p z^i}.
$$
Such coordinates are called Poincar\`e coordinates.
\begin{rem} We remark that in case of expanding gradient
K\"ahler-Ricci soliton with nonnegative Ricci curvature,
Poincar\'e coordinates also exist. This is because in  this case,
the potential function $f$ has a unique critical point and
$J(\nabla f)$ is also a Killing vector field.
\end{rem}

   Now let $\phi:(\tN,\tth) \to (\tN, \tth)$ be a holomorphic
   isometry.  Thus $\phi^*(\tth)=\tth$.
\begin{lem}\label{s2l0} The following are true:

\begin{enumerate}
  \item $\phi^* (f)=f$.
 \item $\phi_*(Z) = Z$.
 \end{enumerate}
 \end{lem}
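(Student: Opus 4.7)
The plan is to deduce (1) by showing that $\phi^\ast f$ satisfies the same soliton equations as $f$, so that their difference is an especially constrained pluriharmonic function on $\tN\cong\C^l$; part (2) will then follow formally from (1) together with the holomorphicity of $\phi$.

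First I would check that $\phi$ fixes $\tilde p$. The steady soliton identity $R+|\nabla f|^2\equiv C$, combined with the uniqueness of the zero of $\nabla f$, shows that $R$ attains a strict global maximum at $\tilde p$. Since $\phi$ is an isometry, $R\circ\phi=R$, forcing $\phi(\tilde p)=\tilde p$. Next, because $\phi$ is a holomorphic isometry it commutes with $\p$, $\bar\p$ and with the Levi--Civita connection, and it preserves the Ricci form, so
\begin{equation*}
(\phi^\ast f)_{i\bar j} = \phi^\ast(R_{i\bar j}) = R_{i\bar j},\qquad (\phi^\ast f)_{ij} = \phi^\ast(f_{ij}) = 0.
\end{equation*}
Hence $h:=\phi^\ast f-f$ is pluriharmonic on $\tN$ and also satisfies $h_{ij}=0$.

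The next step exploits that $\tN$ is biholomorphic to $\C^l$. On this simply connected manifold, pluriharmonicity gives $h=2\re(H)$ for some entire holomorphic function $H$; the condition $h_{ij}=0$ then forces $H_{ij}=0$ for all $i,j$, so $H(z)=\sum_i a_iz^i+c$ is affine. Finally, $d(\phi^\ast f)|_{\tilde p}=\phi^\ast(df|_{\phi(\tilde p)})=0$ together with $df|_{\tilde p}=0$ give $dh|_{\tilde p}=0$, which forces all $a_i=0$. Thus $h$ is constant, and this constant vanishes under the standard normalization of the soliton potential, proving (1). For (2), from $\phi^\ast f=f$ one has $\phi_\ast\nabla f=\nabla f$, and since $\phi$ is holomorphic $\phi_\ast J=J\phi_\ast$, whence $\phi_\ast(J\nabla f)=J\nabla f$ and therefore $\phi_\ast Z=Z$.

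The main subtlety is upgrading the local PDE conclusions $h_{i\bar j}=h_{ij}=0$ to the global statement that $h$ is affine; this is where the prior identification $\tN\cong\C^l$ via \cite{ChauTam05,Bryant07} is essential, since without that global structure one would only obtain a local conclusion and could not rule out pluriharmonic obstructions of a more general form.
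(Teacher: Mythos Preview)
Your overall strategy is sound, but there is a gap at the step where you conclude that $H$ is affine. The condition $h_{ij}=0$ you obtain is the vanishing of the \emph{covariant} $(2,0)$--Hessian with respect to $\tth$, namely $\partial_i\partial_j h-\Gamma^k_{ij}(\tth)\,\partial_k h=0$; since $H$ is holomorphic this indeed gives $\nabla_i\nabla_j H=0$, but it does \emph{not} give $\partial_i\partial_j H=0$ in the global holomorphic coordinates on $\tN\cong\C^l$, because the Christoffel symbols of $\tth$ have no reason to vanish in those coordinates. Thus the inference ``$H_{ij}=0$, hence $H(z)=\sum_i a_iz^i+c$'' is not justified as written.

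The repair is simpler than your detour and is exactly what the paper does. You already have $h_{i\bar\jmath}=0$ and $h_{ij}=0$; since $h$ is real this also gives $h_{\bar\imath\bar\jmath}=0$, so the \emph{full} Riemannian Hessian $\nabla^2 h$ vanishes. Hence $\nabla h$ is parallel, and since you have shown $dh|_{\tilde p}=0$, it follows that $\nabla h\equiv 0$ and $h$ is constant. This works on any connected Riemannian manifold and in particular does not require the identification $\tN\cong\C^l$; the paper's proof never invokes that biholomorphism. Finally, the constant is zero not because of any ``normalization'' of $f$, but simply because you already proved $\phi(\tilde p)=\tilde p$, so that $h(\tilde p)=f(\phi(\tilde p))-f(\tilde p)=0$. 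Your argument for part~(2) is correct.
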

 \begin{proof} Let $f_1=\phi^*(f)$.  Since
$\phi^*(\tth) =\tth$ we   have $\nabla^2 (f_1)= \phi^*(\nabla^2
f)$ and thus $(f_1)_{\ijb}=R_\ijb=f_\ijb$. Hence $\nabla f_1$ has
a unique zero, which must be at $\tilde p$. Hence $\phi(\tilde
p)=\tilde p$.  So $f_1(\tilde p)=f(\tilde p)$. On the other hand,
since $\nabla f(\tilde p)=\nabla f_1(\tilde p)=0$ and
$\nabla^2(f-f_1)=0$, we conclude that (1) is true.
  (2) follows from the definition of $Z$,  the fact that
  $\phi^*(\tth)=\tth$ and
from (1).  \end{proof}

\begin{lem}\label{s2l1} Let $\phi$ be a holomorphic
isometry of $(\tN,\tth)$. Let $\phi(z)=w$ in the Poincar\'e
coordinates. Then $\phi$ is an upper triangular polynomial map in the sense that
$$
w=A(z)+G(z).
$$
where $A(z)$ is a linear map and $G=(G_1,\dots,G_l)$ with
$G_i=G_i(z^{i+1},\dots,z^l)$. If in addition, for all $j$
$$
\sum_{i=1}^l m_i\lambda_i\neq \lambda_j
$$
for all nonnegative integers $m_i$  such that
$\sum_{i=1}^lm_i\ge2$, then $\phi$ is a linear map.
 \end{lem}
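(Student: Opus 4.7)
The plan is to exploit that $\phi$ commutes with the diagonal flow generated by $Z$ in Poincar\'e coordinates. By Lemma~\ref{s2l0}, $\phi_{*}Z=Z$ and $\phi$ fixes the unique zero $\tilde p$ of $Z$ (unique because every $\lambda_i>0$); in Poincar\'e coordinates, which are global on $\tN\cong\C^l$, this translates to $\phi$ being an entire holomorphic automorphism of $\C^l$ with $\phi(0)=0$. I would expand each component as a convergent power series $\phi^j(z)=\sum_{\alpha}c^{j}_{\alpha}z^{\alpha}$, indexed by nonnegative multi-indices $\alpha=(\alpha_1,\dots,\alpha_l)$ with $|\alpha|\ge 1$.

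The invariance $\phi_{*}Z=Z$ becomes
\[
\sum_{i=1}^{l}\lambda_{i}z^{i}\frac{\p\phi^{j}}{\p z^{i}}=\lambda_{j}\phi^{j}(z),\qquad j=1,\dots,l,
\]
and matching the coefficient of $z^{\alpha}$ on both sides gives $\big(\sum_{i}\alpha_{i}\lambda_{i}-\lambda_{j}\big)c^{j}_{\alpha}=0$. Thus $c^{j}_{\alpha}$ can be nonzero only on the resonance set $\sum_{i}\alpha_{i}\lambda_{i}=\lambda_{j}$, and since every $\lambda_{i}$ is strictly positive this equation admits only finitely many nonnegative integer solutions $\alpha$, so each $\phi^{j}$ is a polynomial.

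For the upper triangular form, I would use the ordering $\lambda_{1}\ge\cdots\ge\lambda_{l}>0$ and show that $|\alpha|\ge 2$ together with $\sum_{i}\alpha_{i}\lambda_{i}=\lambda_{j}$ forces $\alpha_{i}=0$ for every $i\le j$. Indeed, if some $\alpha_{i}\ge 1$ with $i\le j$, then $\lambda_{i}\ge\lambda_{j}$: either $\alpha_{i}\ge 2$, in which case $\sum_{k}\alpha_{k}\lambda_{k}\ge 2\lambda_{i}\ge 2\lambda_{j}>\lambda_{j}$, or $\alpha_{i}=1$ and the hypothesis $|\alpha|\ge 2$ produces some $k\ne i$ with $\alpha_{k}\ge 1$, giving $\sum_{k}\alpha_{k}\lambda_{k}\ge\lambda_{i}+\lambda_{k}\ge\lambda_{j}+\lambda_{k}>\lambda_{j}$. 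Both contradict resonance, so the higher-order part of $\phi^{j}$ involves only $z^{j+1},\dots,z^{l}$, which is precisely the decomposition $\phi^{j}=A^{j}(z)+G_{j}(z^{j+1},\dots,z^{l})$. Under the additional non-resonance hypothesis $\sum_{i}m_{i}\lambda_{i}\ne\lambda_{j}$ for every $j$ and every $m$ with $|m|\ge 2$, no monomial of degree at least two survives, so $G\equiv 0$ and $\phi=A$ is linear.

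The only delicate ingredient I see is the global nature of the Poincar\'e coordinates supplied by Bryant, which makes the formal power-series manipulation above valid on all of $\tN$ and ensures $\phi$ really is given globally by the polynomial expression produced. Once that is granted the argument reduces to bookkeeping of resonances, enabled by the strict positivity and the ordering of the $\lambda_{i}$, and no serious obstacle remains.
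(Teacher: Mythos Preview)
Your proposal is correct and follows essentially the same approach as the paper: use $\phi_{*}Z=Z$ from Lemma~\ref{s2l0}, expand $\phi$ in a global power series in Poincar\'e coordinates (with no constant term since $\phi(\tilde p)=\tilde p$), and equate coefficients to obtain the resonance constraint $\sum_i\alpha_i\lambda_i=\lambda_j$, from which the polynomial and upper triangular structure follow by the positivity and ordering of the $\lambda_i$. The paper merely sketches this and attributes the computation to \cite{Bryant07}; your version supplies the details of the triangularity argument that the paper omits.
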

 \begin{proof} This is basically a result in
 \cite{Bryant07}. For the sake of completeness, we sketch
 the proof here. Since $\phi_*(Z)=Z$ and we are using
 Poincar\'e coordinates in the domain and target, for each
 $i$, we have
  \begin{equation}\label{s2e1}
  \lambda_i w^i=\sum_{j=1}^l\lambda_j z^j\frac{dw^i}{dz^j}
  \end{equation}
  everywhere.  On the other hand for each $i$, as a function
  of $z$ we know that $w^i$ has a global power series
  representation
  \begin{equation}\label{s2e2}
w^i=\sum_{|\alpha|=1}^{|\alpha|=\infty}c_{\alpha}(i)
z^{\alpha} \end{equation} where $\alpha$ is a multi-index.
The fact that there is no constant term in (\ref{s2e2}) follows
from the the fact that by their definition, both $z$ and $w$ must
vanish at  $\tilde p \in \tN$.   Now by substituting the power
series in (\ref{s2e2}) and its derivative with respect to $z$ for
both sides of (\ref{s2e1}), equating coefficients and using the
fact that the $\lambda_i$'s are all positive, the result follows.

 \end{proof}

In    \cite{Bo08}, Yang proves the following: Let $M$ be a
noncompact K\"ahler-Ricci soliton with nonnegative Ricci
curvature. Suppose its scalar curvature attains a positive maximum
at a compact complex submanifold $K$ with codimension 1 and the
Ricci curvature is positive away from $K$. Then $M$ is a
holomorphic line bundle over $K$.

In case the \hb\ is bounded and nonnegative, from the lemma, the
proof of Theorem \ref{fiberbundle} and the discussion before Lemma
\ref{s2l0},   we have the following:
\begin{thm}\label{steady-s3t2}  Let $(M, g)$ be a steady gradient \KR soliton with bounded non-negative holomorphic bisectional curvature such that the scalar curvature is maximal at some $p\in M$. Then assuming the above notation,  there exists global complex coordinates on $\tN$
such that $M$ is a Riemannian and holomorphic fiber bundle over
$\C^k/\Lambda_1$ with fiber $C^{n-k}$ such that each $\gamma_2\in \Lambda_2$ is of
upper triangular form. If in addition, for all positve $\lambda_j$ we have
$$
\sum_{i=1}^l m_i\lambda_i\neq \lambda_j
$$
for all nonnegative integers $m_i$  such that
$\sum_{i=1}^lm_i\ge2$. Then $M$ is a holomorphic vector bundle
over $\C^k/\Lambda_1$.
\end{thm}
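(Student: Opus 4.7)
The plan is to deduce Theorem \ref{steady-s3t2} as a direct combination of Theorem \ref{fiberbundle} with Lemmas \ref{s2l0} and \ref{s2l1}, using Bryant's Poincaré coordinates to diagonalize the action on the fiber. First I would observe that the steady soliton $(M,g)$ generates an eternal solution $g(t)=\varphi_t^*g$ of the Kähler--Ricci flow, where $\varphi_t$ is the one-parameter group of diffeomorphisms generated by $-\nabla f$, and this solution has uniformly bounded nonnegative holomorphic bisectional curvature. At a maximum point $p\in M$ of the scalar curvature, $\nabla f(p)=0$ by \cite{CaoHamilton2000}, so $\varphi_t(p)=p$, $R(p,t)$ is constant in $t$, and the positive eigenvalues of $\mathrm{Ric}(p,t)$ coincide with those at the fixed point and are therefore uniformly pinched while $tR(p,t)\to\infty$. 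Hence $(M,g(t))$ satisfies {\sc Basic Assumption 2} in Case 2 with $g(t)$ an eternal solution, so Theorem \ref{fiberbundle} yields that $M$ is a holomorphic and Riemannian fiber bundle over $\C^k/\Lambda_1$ with fiber $\tN\cong\C^{n-k}$.

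Next I would pin down the transition data in terms of Poincaré coordinates. Fix global Poincaré coordinates $(z^1,\dots,z^l)$ on $\tN$ centered at the unique critical point $\tilde p$ of $f$, as furnished by \cite{Bryant07}, so that $Z=\sum_{i=1}^l\lambda_i z^i\frac{\p}{\p z^i}$. By Lemma \ref{s2l0} every holomorphic isometry of $(\tN,\tth)$ fixes $\tilde p$ and preserves $Z$, and by Lemma \ref{s2l1} every such isometry is upper triangular polynomial in these coordinates. The proof of Theorem \ref{fiberbundle} identifies the fiberwise action in the bundle structure with the natural action of $\Lambda_2$ on $\tN$ by holomorphic isometries (via the projection $\rho_2\colon\Gamma\to\Lambda_2$ and the isomorphism $\beta\colon\Gamma\to\Lambda_1$). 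Written in the Poincaré chart, this exhibits each $\gamma_2\in\Lambda_2$ as an upper triangular polynomial map of $\C^{n-k}$, which proves the first assertion.

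For the second assertion, assume the Diophantine condition
\begin{equation*}
\sum_{i=1}^l m_i\lambda_i\neq\lambda_j\quad\text{for all $j$ and all $m_i\in\mathbb{Z}_{\ge 0}$ with }\textstyle\sum_i m_i\ge 2.
\end{equation*}
The final clause of Lemma \ref{s2l1} then forces every $\gamma_2\in\Lambda_2$ to be a $\C$-linear map of $\C^{n-k}$. Since all elements of $\Lambda_2$ fix the origin by Lemma \ref{s2l0}, the transition data of the fiber bundle factors through a homomorphism $\Lambda_1\to GL(n-k,\C)$, and the holomorphic fiber bundle is therefore a holomorphic vector bundle over $\C^k/\Lambda_1$.

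The only subtle point in this scheme is the identification between the abstract fiberwise action appearing in the construction of the bundle in Theorem \ref{fiberbundle} and the concrete action of $\Lambda_2$ on $\tN$ by holomorphic isometries of $(\tN,\tth)$; once that compatibility is in hand (which is immediate from the diagonal action of $\Gamma$ on $\tM=\C^k\times\tN$ under Cao's splitting), the linearization statements of Lemma \ref{s2l1} apply in the Poincaré chart and yield both claimed structures.
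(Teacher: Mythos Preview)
Your proof is correct and follows the paper's approach exactly: the paper's own proof is the one-line remark that the theorem follows from Lemma~\ref{s2l1}, the proof of Theorem~\ref{fiberbundle}, and the discussion preceding Lemma~\ref{s2l0}, and you have simply written these ingredients out in detail. One minor wrinkle: the assertion $\nabla f(p)=0$ on $M$ itself is not quite what \cite{CaoHamilton2000} gives (their argument uses strictly positive Ricci, which holds on $\tN$ but not necessarily on $M$); nevertheless your needed conclusions---that $R(p,t)$ is constant and the positive eigenvalues of $\mathrm{Ric}(p,t)$ are uniformly pinched---still follow, since under the splitting $\tM=\C^k\times\tN$ the $(-\nabla f)$-orbit of any lift of $p$ has constant $\tN$-component $\tilde p$.
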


We end this section by proving a general volume growth estimate
for expanding gradient   solitons. The motivation is from the work
of Ni \cite{Ni05-1} where he proved that a complete noncompact
K\"haler manifold with bounded and nonnegative \hb\ and with
maximum volume growth must have quadratic curvature decay as in
(1) in the Introduction.

 \begin{prop}\label{volume-s3t1} Suppose $(M^n,g)$ is a complete Riemannian
 manifold such that $g$ is an expanding gradient
 Ricci soliton with nonnegative scalar curvature. Then for any $o\in M$, there is a
 constant $C>0$ such that $V_o(r)\ge Cr^n$ for all $r\ge 1$, where $V_o(r)$ is the volume
 of geodesic ball of radius $r$ with center at $o$.
 \end{prop}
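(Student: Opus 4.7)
The plan is to exploit the two canonical identities for a gradient Ricci soliton to produce a nonnegative function $u$ on $M$ that behaves like $|x|^2/4$ on the Gaussian expanding soliton, and then to run a divergence theorem argument on geodesic balls to read off Euclidean volume growth.

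After a constant rescaling of $g$ we may put the soliton equation in the form $R_{ij}+\nabla_i\nabla_j f+\tfrac12 g_{ij}=0$. Tracing gives $\Delta f=-R-n/2$, and the standard contracted-Bianchi manipulation (differentiate the soliton equation and use $\nabla^j R_{ij}=\tfrac12\nabla_i R$) yields Hamilton's soliton identity
\[
R+|\nabla f|^2+f=C_0
\]
for some constant $C_0$. Since adding a constant to $f$ preserves the soliton equation, I normalize $C_0=0$; then the hypothesis $R\ge 0$ forces $f\le 0$, so $u:=-f\ge 0$ satisfies
\[
|\nabla u|^2=u-R\le u,\qquad \Delta u=R+\tfrac n2\ge \tfrac n2.
\]
On Euclidean $\mathbb{R}^n$ with Gaussian potential $f=-|x|^2/4$ all three relations become equalities, which is the model to keep in mind.

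The gradient bound $|\nabla u|^2\le u$ shows that $\sqrt{u}$ is $\tfrac12$-Lipschitz, so for every $x\in B_o(r)$,
\[
\sqrt{u(x)}\le \sqrt{u(o)}+r/2,\qquad |\nabla u(x)|\le \sqrt{u(o)}+r/2.
\]
Integrating $\Delta u\ge n/2$ over $B_o(r)$ and invoking the divergence theorem,
\[
\tfrac{n}{2}V_o(r)\le \int_{B_o(r)}\Delta u=\int_{\partial B_o(r)}\partial_\nu u\le \bigl(\sqrt{u(o)}+r/2\bigr)A_o(r),
\]
where $A_o(r)$ denotes the area of the geodesic sphere. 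Rearranging yields the differential inequality
\[
V_o'(r)=A_o(r)\ge \frac{n\,V_o(r)}{r+2\sqrt{u(o)}}
\]
for almost every $r>0$, and integrating from $r=1$ produces $V_o(r)\ge V_o(1)(1+2\sqrt{u(o)})^{-n}r^n$ for all $r\ge 1$, which is the required Euclidean lower bound.

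The only technical point is the divergence theorem on $B_o(r)$, whose boundary is only Lipschitz because of the cut locus of $o$. This is standard: the cut locus has measure zero, the distance function is Lipschitz, and the needed boundary inequality either follows from the distributional Laplacian comparison framework or can be obtained by slightly smoothing $r(\cdot)$ and passing to the limit. Once this is dispatched, the argument reduces to the clean observation that $-f$ mimics $|x|^2/4$ well enough — $\tfrac12$-Lipschitz square root and Laplacian bounded below by $n/2$ — to force Euclidean volume growth.
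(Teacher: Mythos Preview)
Your proof is correct and follows essentially the same route as the paper: use the soliton identity $R+|\nabla f|^2-2f=\text{const}$ (in the paper's normalization) together with $R\ge 0$ to make $\sqrt{f}$ Lipschitz along geodesics, bound $|\nabla f|$ linearly in $r$, combine with $\Delta f\ge n$ via the divergence theorem to get $nV(r)\le (r+C)A(r)$, and integrate the resulting differential inequality. The only differences are your choice of scaling ($\tfrac12 g_{ij}$ rather than $g_{ij}$, and working with $u=-f$), and your explicit acknowledgment of the cut-locus issue, which the paper passes over in silence.
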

 \begin{proof} Let $f$ be such that $f_{ij}=R_{ij}+g_{ij}$. By the
 proof     in \cite[\S20]{Hamilton95-2} (see also \cite{CCGGIIKLLN}),
 $\mathcal{R}+|\nabla f|^2-2f$ is a constant, where $R$ is the scalar curature. Modifying $f$
by adding a constant to $f$, we may assume that
$\mathcal{R}+|\nabla f|^2-2f=0$. Since $\mathcal{R}\ge0$, we have
$f\ge0$ and $|\nabla f|^2\le 2f.$  Fix   $o\in M$. For any $x$ let
$\gamma(s)$ be a minimal geodesic from $o$ to $x$ parametrized by
arc length and let $r=d(o,x)$. Then for any $0\le \tau\le r$
\begin{equation}\label{vol-soliton-e3}
   \begin{split}
 \frac{d}{d\tau}f(\gamma(\tau))&\le |\nabla f|\\
 &\le (2f)^\frac12.
\end{split}
\end{equation}
Hence $f^\frac12(x)\le \frac 1{\sqrt 2} r+f^\frac12(o)$. This
implies that $|\nabla f|(x) \le r+\lf(2f(o)\ri)^\frac12 $.

On the other hand, $\Delta f\ge n$, where $n$ is the dimension of
$M$. For any $R$
\begin{equation}\label{vol-soliton-e4}
   \begin{split}
nV(R)&\le \int_{B(R)} \Delta f\\
&=\int_{\p B(R)}\frac{\p f}{\p \nu}\\
&\le \int_{\p B(R)}|\nabla f|\\
&\le \lf(R+\lf(2f(o)\ri)^\frac12\ri)A(R)
\end{split}
\end{equation}
where $A(R)$ is the area of $\p B_o(R)$ and $V(R)=V_o(R)$. Hence
$$V(R)\ge
\lf(\frac{R+\lf(2f(o)\ri)^\frac12}{1+\lf(2f(o)\ri)^\frac12}\ri)^nV(1)
$$
for $R\ge1$.

 \end{proof}

\bibliographystyle{amsplain}

  \end{document}